\documentclass[12pt]{article}
\usepackage{hyperref}
\usepackage{epsfig}
\usepackage{amsfonts}
\usepackage{amssymb}
\usepackage{amsmath}
\usepackage{amsthm}
\usepackage{latexsym}
\usepackage{graphicx}

\textheight 210mm \textwidth 130mm

\newtheorem{theorem}{Theorem}[section]
\newtheorem{lemma}[theorem]{Lemma}
\newtheorem{conjecture}[theorem]{Conjecture}
\newtheorem{corollary}[theorem]{Corollary}
\newtheorem{proposition}[theorem]{Proposition}
\newtheorem{problem}[theorem]{Problem}
\newtheorem{remark}[theorem]{Remark}

\DeclareMathOperator{\DV}{DV}
\DeclareMathOperator{\vol}{vol}
\DeclareMathOperator{\diam}{diam}

\newcommand{\MK}{\mathcal K}
\newcommand{\ML}{\mathcal L}
\newcommand{\MCR}{\mathcal R}
\newcommand{\E}{\mathbb R}
\newcommand{\R}{\mathbb R}
\newcommand{\Z}{\mathbb Z}
\renewcommand{\vec}[1]{\boldsymbol{#1}}
\newcommand{\rl}{r_{\!_L}}

\begin{document}

\title{The Isodiametric Problem with Lattice-Point
Constraints\footnote{The first author was supported in part by
Direcci\'on General de Investigaci\'on (MEC) MTM2004-04934-C04-02
and by Fundaci\'on S\'eneca (C.A.R.M.) 00625/PI/04. The second and
the third author were supported by the Deutsche
Forschungsgemeinschaft (DFG) under grant SCHU 1503/4-2. During the
work on this paper the third author was partially supported by the
Edmund Landau Center for Research in Mathematical Analysis and
Related Areas, sponsored by the Minerva Foundation (Germany), and he
was partially supported by the Netherlands Organization for
Scientific Research under grant NWO 639.032.203.}}

\author{M. A. Hern\'andez Cifre, A. Sch\"urmann, F.
Vallentin}

\date{}

\maketitle

\begin{abstract}
In this paper, the isodiametric problem for centrally symmetric convex
bodies in the Euclidean $d$-space $\E^d$ containing no interior non-zero
point of a lattice $L$ is studied. It is shown that the intersection of a
suitable ball with the Dirichlet-Voronoi cell of $2L$ is extremal, i.e.,
it has minimum diameter among all bodies with the same volume. It is
conjectured that these sets are the only extremal bodies, which is proved
for all three dimensional and several prominent lattices.
\end{abstract}

\smallskip

{\footnotesize 2000 Mathematical Subject Classification: Primary
52A20, 52C07; Secondary 52A40}

{\footnotesize Keywords: Isodiametric problem, lattices,
Dirichlet-Voronoi cells, parallelohedra}

\bigskip

\section{Introduction}

Let $\E^d$ be the $d$-dimensional Euclidean space endowed with standard
norm $\|\cdot\|$ and inner product $\langle \cdot ,\cdot \rangle$. We
denote the set of full rank lattices by $\ML^d$, where a (full rank) {\em
lattice} $L$ of $\E^d$ is a set of the form $L = A\mathbb{Z}^d$ with $A
\in \mbox{GL}_d(\mathbb{R})$. The columns of $A$ are called a {\em basis}
of $L$. The determinant $\det L = |\det A|$ of the lattice is independent
of the chosen basis.

For a lattice $L$, let $\MK_L$ be the family of all centrally symmetric
convex bodies, that is, compact convex sets $K$ with $K = -K$, which do
not contain a non-zero lattice point in their interior. We denote the
volume ($d$-dimensional Lebesgue measure) of a convex body $K \in \MK_L$
by $\vol K$ and its diameter by $\diam K=2\max_{\vec{x}\in K}
\|\vec{x}\|$. Notice that for centrally symmetric sets the diameter is
twice the circumradius.

{\em Minkowski's first fundamental theorem} (see e.g. \cite[\S~5,
Th.~2]{gru-lek} and \cite{minkowski}) gives an upper bound for the
volume of a convex body $K \in \MK_L$:
\begin{theorem}[Minkowski, 1891] \label{mink-theorem}
If $L \in \ML^d$ and $K\in \MK_L$ then
\[
\vol K \leq 2^d\det L.
\]
\end{theorem}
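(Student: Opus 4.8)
The plan is to deduce Theorem~\ref{mink-theorem} from Blichfeldt's lemma, which is the measure-theoretic heart of the matter. Recall Blichfeldt's statement: if $S \subseteq \E^d$ is measurable with $\vol S > \det L$, then $S$ contains two distinct points $\vec x,\vec y$ whose difference $\vec x-\vec y$ lies in $L$. I would prove this by a pigeonhole argument: fix a basis $A$ of $L$ and let $P = A[0,1)^d$ be the corresponding half-open fundamental parallelepiped, so that $\E^d$ is the disjoint union of the translates $P+\vec v$, $\vec v\in L$, with $\vol P=\det L$. Decompose $S=\bigcup_{\vec v\in L}(S\cap(P+\vec v))$ and translate each piece back into $P$ via $S_{\vec v}:=(S\cap(P+\vec v))-\vec v\subseteq P$. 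Since $\sum_{\vec v}\vol S_{\vec v}=\vol S>\det L=\vol P$, the sets $S_{\vec v}$ cannot be pairwise disjoint; hence there are $\vec v\neq\vec w$ in $L$ and a point $\vec p\in S_{\vec v}\cap S_{\vec w}$, and then $\vec x:=\vec p+\vec v$ and $\vec y:=\vec p+\vec w$ are distinct points of $S$ with $\vec x-\vec y=\vec v-\vec w\in L\setminus\{\vec 0\}$.

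Next I would apply this to $S=\tfrac12 K$. Note first that every centrally symmetric convex body has the origin in its interior, so in particular $\vol K>0$. Assume, for contradiction, that $\vol K>2^d\det L$. Then $\vol(\tfrac12 K)=2^{-d}\vol K>\det L$, so Blichfeldt yields distinct $\vec x,\vec y\in\tfrac12 K$ with $\vec z:=\vec x-\vec y\in L\setminus\{\vec 0\}$. Now $2\vec x\in K$ and, using $K=-K$, also $-2\vec y\in K$; by convexity the midpoint $\tfrac12(2\vec x)+\tfrac12(-2\vec y)=\vec x-\vec y=\vec z$ lies in $K$. Thus $K$ contains the non-zero lattice point $\vec z$.

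To contradict $K\in\MK_L$ I still need a non-zero lattice point in the \emph{interior} of $K$, and this is the only genuinely delicate point; I handle it by a scaling trick. Since the inequality $\vol K>2^d\det L$ is strict and $t\mapsto t^d\vol K$ is continuous, there is $t\in(0,1)$ with $\vol(tK)=t^d\vol K>2^d\det L$ as well. Running the previous paragraph with $tK$ in place of $K$ produces a non-zero lattice point $\vec z\in tK$; but $tK\subseteq\operatorname{int}K$ because $\vec 0\in\operatorname{int}K$ and $K$ is convex, so $\vec z$ is a non-zero lattice point in the interior of $K$, contradicting $K\in\MK_L$. Hence $\vol K\le 2^d\det L$, as claimed. (Alternatively one could invoke a compactness argument on the finite set $L\cap K$ to treat the equality case directly, but the scaling argument above is cleaner; everything else reduces to pigeonhole and elementary convexity.)
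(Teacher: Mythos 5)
Your argument is correct, and it is the standard Blichfeldt--pigeonhole proof of Minkowski's first theorem. Note that the paper does not prove this statement at all: it is quoted as a classical result with references to Minkowski and to Gruber--Lekkerkerker, so there is no ``paper proof'' to compare against. Your three steps --- Blichfeldt's lemma via the half-open fundamental parallelepiped, the application to $\tfrac12 K$ using central symmetry and convexity to land the difference $\vec x-\vec y$ in $K$, and the scaling by $t<1$ to upgrade ``lattice point in $K$'' to ``lattice point in the interior of $K$'' --- are exactly what is needed for the version stated here (which concerns bodies with no non-zero lattice point in the \emph{interior}, so the strict form obtained by shrinking is the right one). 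One small caveat: the paper's definition of a centrally symmetric convex body (a compact convex set with $K=-K$) does not insist on full dimension, so the blanket claim that the origin is an interior point is not literally true for degenerate $K$; but in that case $\vol K=0$ and the inequality is trivial, and under your contradiction hypothesis $\vol K>2^d\det L>0$ the body is full-dimensional, its centroid is the origin, and the origin is indeed interior, so the step $tK\subseteq\operatorname{int}K$ is sound where you use it.
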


Here we consider the following problem:

\begin{problem}[The isodiametric problem with lattice point constraints] \label{iso-problem}
Given a lattice $L\in\ML^d$ and a real number $V\in (0,2^d\det L]$,
determine the minimum diameter
\[
\diam_L(V)=\min\left\{\diam K : K \in \MK_L, \vol K = V\right\}
\]
and the bodies $K\in\MK_L$ for which this minimum is attained.
\end{problem}

Since the problem is trivial for $d = 1$, we consider it only for $d \geq
2$. In Theorem~\ref{satz1} we solve it partially by giving a description
of convex bodies attaining the minimum. We conjecture
(Conjecture~\ref{conj1}) that these convex bodies are the only ones
attaining the minimum and we show that the conjecture is valid in many
cases, e.g., for a wide class of lattices
(Corollary~\ref{coro_small-radii-theorem} and
Proposition~\ref{prop:ridge}) and for many values of $V$
(Corollary~\ref{small-radii-theorem}). In particular we give a complete
answer for all lattices of dimension $d\leq 3$, for the integral lattice
$\Z^d$, the Leech lattice $\Lambda_{24}$, all root lattices $\mathsf{A}_d$
with $d \geq 2$, $\mathsf{D}_d$ with $d \geq 3$, $\mathsf{E}_d$ with
$d=6,7,8$ and all their reciprocals (Theorem \ref{t:sol_lattices}). The
$2$-dimensional case was already completely solved in \cite{hern-scott} by
different methods.

\section{Main Results}\label{sec:results}

Before stating our main results we need some more notation. For a
lattice $L \in \ML^d$ we consider its {\it Dirichlet-Voronoi cell}
\[
\DV(L)=\bigl\{\vec{x}\in\E^d:\|\vec{x}\|\leq\|\vec{x}-\vec{y}\|\mbox{ for
all }\vec{y}\in L\bigr\}.
\]
Its volume is equal to $\det L$, its circumradius is equal to the {\it
inhomogeneous minimum} (also called covering radius) of $L$,
\[
\mu(L)=\min_{\vec{y} \in L} \max_{\vec{x} \in \E^d} \|\vec{x} -
\vec{y}\| =\min\bigl\{\mu\in\R:L + B_d(\mu) = \E^d\bigr\},
\]
and its inradius (also called packing radius) is half of the {\it
homogeneous minimum} of $L$,
\[
\lambda(L)=\min_{\vec{y} \in L\setminus \{\vec{0}\}} \|\vec{y}\| =
\min\bigl\{\lambda \in \R : B_d(\lambda) \cap L \neq\{\vec{0}\}\bigr\}.
\]
Here $B_d(r)=\bigl\{\vec{x}\in \E^d : \|\vec{x}\| \leq r\bigr\}$
denotes the $d$-dimensional ball centered at the origin with radius
$r$. For sets $A,B\subset\E^d$ we write
$A+B=\{\vec{a}+\vec{b}:\vec{a}\in A,\,\vec{b}\in B\}$ to denote the
Minkowski addition (vector sum). We write $\vec{y}+K$ instead of
$\{\vec{y}\}+K$.  For a set $A \subset \E^d$ and a real number
$\alpha$ we define $\alpha A=\{\alpha \vec{a}:\vec{a}\in A\}$. Then,
if $K$ is a $d$-dimensional convex body we have $\vol (\alpha K) =
\alpha^d \vol K$ and $\diam (\alpha K) = \alpha \diam K$.

Clearly, $\DV(L)$ is a {\it parallelohedron}, i.e., a convex polytope
which tiles $\E^d$ by (lattice) translations. Furthermore, every
$(d-1)$-dimensional face (facet) of $\DV(2L)$ contains exactly one
lattice point in its relative interior, and it is centrally symmetric
with respect to this point (see e.g. \cite[\S~12]{gru-lek}).

For $V \in (0,2^d\det L]$ we define the convex body $K_L(V)$ by
choosing the (unique) positive real number $\rl(V)$ such that
\begin{equation} \label{extremal-body}
K_L(V)=B_d\bigl(\rl(V)\bigr)\cap\DV(2L)
\end{equation}
has volume $V$ (see Figure \ref{extremal_sets} for an example of
$K_L(V)$ in $\E^3$).

\begin{theorem}\label{satz1}
Let $L \in \ML^d$ be a lattice and let $V \in (0,2^d\det L]$. Then for
all $K \in \MK_L$ with $\vol K=V$ the inequality $\diam K_L(V) \leq
\diam K$ holds.
\end{theorem}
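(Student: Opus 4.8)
The plan is to reduce the theorem to a single volume inequality and then prove that by a Minkowski-type folding argument over the cell $\DV(2L)$. Since $K$ and $K_L(V)$ are centrally symmetric, their diameters are twice their circumradii. Write $r$ for the circumradius of $K$, so $K\subseteq B_d(r)$, $\diam K=2r$, and $\diam K_L(V)\le 2\rl(V)$ (because $K_L(V)\subseteq B_d(\rl(V))$); hence it suffices to show $\rl(V)\le r$. The function $g(\rho):=\vol\bigl(B_d(\rho)\cap\DV(2L)\bigr)$ is continuous, non-decreasing, equals $\vol\DV(2L)=2^{d}\det L$ for $\rho\ge\mu(2L)$, and is strictly increasing on $[0,\mu(2L)]$: every $\rho\in(0,\mu(2L))$ is attained at a point in the relative interior of the segment from $\vec{0}$ to a point of $\DV(2L)$ of norm $\mu(2L)$, hence at an interior point of $\DV(2L)$, so nearby volume is picked up. Consequently $\rl(V)$ is the unique value in $(0,\mu(2L)]$ with $g(\rl(V))=V$, and the whole statement follows once I prove
\[
\vol K\ \le\ \vol\bigl(B_d(r)\cap\DV(2L)\bigr):
\]
this reads $g(r)\ge V=g(\rl(V))$ and, since $\rl(V)\le\mu(2L)$, monotonicity of $g$ forces $r\ge\rl(V)$.

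For the displayed inequality I would use two facts. First, $K=-K$ together with $\mathrm{int}\,K\cap\bigl(L\setminus\{\vec{0}\}\bigr)=\emptyset$ forces the translates $\{\vec{v}+K:\vec{v}\in 2L\}$ to have pairwise disjoint interiors: if $\vec{x}\in(\vec{v}+\mathrm{int}\,K)\cap(\vec{v}'+\mathrm{int}\,K)$ with $\vec{v}\ne\vec{v}'$ in $2L$, then $\vec{x}-\vec{v}\in\mathrm{int}\,K$ and, by symmetry, $\vec{v}'-\vec{x}\in\mathrm{int}\,K$, so their midpoint $\tfrac12(\vec{v}'-\vec{v})$ is a non-zero point of $L$ in $\mathrm{int}\,K$ --- a contradiction. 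Second, $\DV(2L)$ tiles $\E^{d}$ by translations of $2L$, so the $2L$-periodic counting function $N(\vec{x}):=\#\{\vec{v}\in 2L:\vec{x}-\vec{v}\in K\}$ satisfies $\int_{\DV(2L)}N(\vec{x})\,d\vec{x}=\vol K$ (unfold the integral over $\E^{d}$ of the indicator of $K$ onto $\DV(2L)$, interchanging sum and integral by Tonelli).

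The heart of the argument is a property of $\DV(2L)$ as a \emph{Voronoi} cell: if $\vec{x}\in\DV(2L)$ and $N(\vec{x})\ge 1$, say $\vec{x}-\vec{v}\in K$ with $\vec{v}\in 2L$, then $\|\vec{x}\|\le\|\vec{x}-\vec{v}\|$ by the defining inequality of $\DV(2L)$, while $\|\vec{x}-\vec{v}\|\le r$ since $K\subseteq B_d(r)$; hence $\vec{x}\in B_d(r)$. Combined with the packing property --- which gives $N\le 1$ outside the Lebesgue-null set $\bigcup_{\vec{v}\in 2L}\partial(\vec{v}+K)$ --- this yields $N(\vec{x})\le 1$ for almost every $\vec{x}\in B_d(r)\cap\DV(2L)$ and $N(\vec{x})=0$ for almost every $\vec{x}\in\DV(2L)\setminus B_d(r)$. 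Integrating over $\DV(2L)$ then gives $\vol K\le\vol\bigl(B_d(r)\cap\DV(2L)\bigr)$, which completes the plan.

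The step I expect to carry the real content is this last implication, ``a point of $\DV(2L)$ covered by $\vec{v}+K\subseteq\vec{v}+B_d(r)$ already lies in $B_d(r)$'': it is exactly here that $\DV(2L)$ must be used (the points of a Voronoi cell are the nearest representatives of their $2L$-orbit), and the factor $2$ is precisely what makes the symmetrization in the packing step work, since the $L$-translates of $\tfrac12K$ form a packing. Everything else is routine bookkeeping: the strict monotonicity of $g$ so that $\rl(V)$ is well defined, the fact that the boundary of a convex body is Lebesgue-null, and the standard unfolding/Tonelli argument for the tiling by $\DV(2L)$.
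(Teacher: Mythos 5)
Your proposal is correct and follows essentially the same route as the paper: the paper proves the dual volume-maximization statement (its Theorem 3.1) by folding the translates $2L+K$ into $\DV(2L)$ — using that $2L$ is a packing lattice for $K$, that $\DV(2L)$ tiles, and that a point of $\DV(2L)$ covered by $2\vec{y}+K\subseteq 2\vec{y}+B_d(\rl(V))$ already lies in $B_d(\rl(V))$ — which is exactly your counting-function/Tonelli argument. The only cosmetic difference is that you pass from the volume inequality to the diameter inequality via strict monotonicity of $\rho\mapsto\vol\bigl(B_d(\rho)\cap\DV(2L)\bigr)$, whereas the paper uses a scaling argument.
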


We present a proof in Section \ref{sec:proofs}. We say that a convex body
$K\in\MK_L$ is an {\em extremal body} if it is a solution of Problem
\ref{iso-problem}, hence if $\diam K=\diam K_L(V)$.

\begin{conjecture} \label{conj1}
For $L \in \ML^d$ and $V \in (0,2^d\det L]$, $K_L(V)$ is the unique
extremal body.
\end{conjecture}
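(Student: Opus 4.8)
The plan is to analyze the equality case in the proof of Theorem~\ref{satz1}. That proof (which I take as given) rests on the observation that $K\in\MK_L$ has no non-zero interior lattice point precisely when its translates by $2L$ pack, so the \emph{folding map} $\phi\colon\E^d\to\DV(2L)$, sending $\vec{x}$ to the representative $\vec{x}-2\vec{l}\in\DV(2L)$, restricts to an almost-everywhere injective, volume-preserving map on $K$; moreover $\|\phi(\vec{x})\|\le\|\vec{x}\|$ because $2\vec{l}$ is a nearest point of $2L$ to $\vec{x}$. Hence, writing $R=\tfrac12\diam K$ for the circumradius (attained at the origin by central symmetry, so $K\subseteq B_d(R)$), we get $\phi(K)\subseteq\DV(2L)\cap B_d(R)$ and $\vol K=\vol\phi(K)\le\vol\bigl(\DV(2L)\cap B_d(R)\bigr)$. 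For an extremal $K$ we have $R=\rl(V)$, so this last volume is exactly $V=\vol K$; therefore every inequality is an equality and, since $\phi(K)\subseteq K_L(V)$ with $\vol\phi(K)=\vol K_L(V)=V$, we conclude $\phi(K)=K_L(V)$ up to a set of measure zero.

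The uniqueness claim then reduces to showing that the folding acts trivially on $K$, i.e.\ that $K\subseteq\DV(2L)$. Indeed, if $\vol\bigl(K\setminus\DV(2L)\bigr)=0$ then $\phi=\mathrm{id}$ almost everywhere on $K$, so $K=\phi(K)=K_L(V)$ up to measure zero; as both are convex bodies with equal volume, this forces $K=K_L(V)$. Thus the whole problem is to rule out a positive-volume protrusion of $K$ across the facets of $\DV(2L)$.

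First I would dispose of the small-radius range, which already yields the conjecture for many values of $V$. The inradius of $\DV(2L)$ equals $\lambda(L)$ (half the homogeneous minimum $\lambda(2L)=2\lambda(L)$ of $2L$), so whenever $\rl(V)\le\lambda(L)$ we have $K\subseteq B_d\bigl(\rl(V)\bigr)\subseteq\DV(2L)$ automatically, the folding is trivial, and $K=K_L(V)$. This settles Conjecture~\ref{conj1} unconditionally for all $V$ with $\rl(V)\le\lambda(L)$, i.e.\ precisely when the extremal ball does not reach beyond the Voronoi cell.

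The remaining, and genuinely hard, case is $\rl(V)>\lambda(L)$, where the circumball pokes out of $\DV(2L)$ and a nontrivial fold is a priori possible. Here the equality $\phi(K)=K_L(V)$ says that $K$ is obtained from $K_L(V)$ by cutting it along the Voronoi walls and reassembling the pieces by $2L$-translations into a convex, centrally symmetric body of the same circumradius. Concretely, for each facet $F$ of $\DV(2L)$, centred at a lattice point $\vec{l}\in L$, a protruding cap $P=K\cap(2\vec{l}+\DV(2L))$ would be translated by $-2\vec{l}$ to fill exactly the matching deficiency $K_L(V)\setminus K$ on the opposite side, and by central symmetry the antipodal cap $-P$ behaves identically across $-F$. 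The main obstacle is to show that no such nontrivial reassembly can be convex: one must exploit the facet geometry of the parallelohedron $\DV(2L)$ together with the convexity of $K$ and the fact that $K$ meets the circumsphere. I expect this step to be unavoidably lattice-dependent — which is exactly why the statement remains only a conjecture in general — and would therefore carry it out facet-by-facet for the parallelohedra arising from lattices of dimension $d\le 3$ and from the specific families ($\Z^d$, the root lattices, the Leech lattice and their reciprocals) named in the introduction, where the combinatorics of the Voronoi cell is explicit enough to force $P=\varnothing$.
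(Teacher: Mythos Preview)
Your folding-map setup is correct and is essentially a repackaging of the equality analysis the paper performs implicitly: the conclusion that an extremal $K$ satisfies $\phi(K)=K_L(V)$ up to a null set is exactly the content of the paper's Lemma in Section~4 (parts i)--iii)), and your reduction to showing $K\subseteq\DV(2L)$ is equivalent to the paper's reduction to showing that every separating hyperplane $G_{\vec{y}}$ through a facet center $\vec{y}\in C_L$ coincides with the Voronoi hyperplane $H_{\vec{y}}$. The small-radius case $\rl(V)\le\lambda(L)$ is also handled the same way.

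The gap is in the hard case. You describe the obstruction as ``no nontrivial reassembly of $K_L(V)$ by $2L$-translates can be convex'' and propose to verify this facet-by-facet for the named lattices, but you give no mechanism for doing so. The paper's actual content is precisely such a mechanism, and it is not the convexity-of-reassembly picture you sketch. The key tool is Proposition~\ref{prop:equalfacets}: if a facet $F_{\vec{y}}$ of $\DV(2L)$ has a vertex outside $B_d\bigl(\rl(V)\bigr)$, then one can locate a point $\vec{z}$ on the segment from $\vec{y}$ to that vertex with $\|\vec{z}\|=\|\vec{z}-2\vec{y}\|=\rl(V)$ and $\|\vec{z}-2\vec{y}'\|>\rl(V)$ for all other $\vec{y}'$, and a small ball near $\vec{z}$ lies in $K_L(V)$ but avoids $K$ and every nontrivial translate $2\vec{y}'+B_d\bigl(\rl(V)\bigr)$---contradicting $\vol K=\vol K_L(V)$. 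This ``far-vertex forces the hyperplane'' criterion, together with a separate $6$-belt argument (Proposition~\ref{prop:ridge}) for facets all of whose vertices lie inside the circumball, is what actually settles the listed lattices. Your proposal does not contain either of these ideas; ``work facet-by-facet'' is the right shape, but without the far-vertex criterion and the belt argument it is not yet a proof.
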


Notice that Conjecture~\ref{conj1} is trivially true if $\rl(V) \leq
\lambda(L)$ because of the classical isodiametric inequality without
lattice-point constraints (see e.g. \cite[p.~83]{BonFen}), which says
that for a fixed volume the ball is the only set with minimum
diameter. In Section \ref{sec:equality-cases} we verify Conjecture
\ref{conj1} for many particular cases:

\begin{theorem}\label{t:sol_lattices}
$K_L(V)$ is the only extremal body for any $V \in (0,2^d \det L]$ and for
the following lattices (for explicit descriptions we refer to
\cite{cs-1988}): all lattices in dimension $2$ and $3$, the integral
lattice $\Z^d$, the Leech lattice $\Lambda_{24}$, all root lattices
$\mathsf{A}_d$ with $d \geq 2$, $\mathsf{D}_d$ with $d \geq 3$,
$\mathsf{E}_d$ with $d = 6,7,8$ and all their reciprocals.
\end{theorem}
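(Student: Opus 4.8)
The plan is to split the admissible volumes into a trivial range and a nontrivial one, to reduce the nontrivial range to a rigidity analysis of the equality case of Theorem~\ref{satz1}, and then to verify the resulting numeric conditions for each listed lattice from the explicit data in \cite{cs-1988}. In the trivial range one uses that the inradius of $\DV(2L)$ equals $\lambda(2L)/2=\lambda(L)$, so $K_L(V)=B_d(\rl(V))$ whenever $\rl(V)\le\lambda(L)$; for such $V$ the classical isodiametric inequality already says that the ball is the \emph{unique} body of that volume with minimal diameter. Hence it suffices to treat $\rl(V)\in(\lambda(L),\mu(2L)]$, the range in which the ball genuinely truncates the cell $\DV(2L)$.

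Recall next the proof of Theorem~\ref{satz1}: writing $R=\diam K/2$ for $K\in\MK_L$, folding $K$ into $\DV(2L)$ along the translations of $2L$ gives
\[
\vol K=\sum_{\vec{v}\in 2L}\vol\bigl((K-\vec{v})\cap\DV(2L)\bigr)\le\vol\bigl(B_d(R)\cap\DV(2L)\bigr),
\]
because a point of $\DV(2L)$ lying within $R$ of some point of $2L$ already lies within $R$ of $\vec{0}$; monotonicity of the right-hand side in $R$ forces $R\ge\rl(V)$. If $K$ is extremal then $R=\rl(V)$, equality holds throughout, and --- as $\{K+\vec{v}:\vec{v}\in 2L\}$ is a packing because $K\in\MK_L$ --- the essentially disjoint pieces $P_{\vec{v}}:=(K-\vec{v})\cap\DV(2L)$ partition $K_L(V)=B_d(\rl(V))\cap\DV(2L)$, while $K=\bigcup_{\vec{v}\in 2L}(P_{\vec{v}}+\vec{v})$. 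In other words, $K$ is reassembled from $K_L(V)$ by cutting it into pieces and translating each piece by a vector of $2L$, and one must show that convexity, central symmetry and the constraint $K\subseteq B_d(\rl(V))$ force every such translation to be trivial. Here the central-symmetry structure of the faces of $\DV(2L)$ enters: a nonempty piece $P_{\vec{v}}$ with $\vec{v}\ne\vec{0}$ would have to reach the relative interior of some facet $F$ of $\DV(2L)$ --- the common facet of $\DV(2L)$ and $\DV(2L)+\vec{v}_0$ for a suitable $\vec{v}_0\in 2L$, with centre $\vec{w}=\vec{v}_0/2$ a nonzero lattice point --- and, matching $K$ and $K+\vec{v}_0$ across $F$ while using that $F$ is centrally symmetric about $\vec{w}$ and that only these two Voronoi cells of $2L$ meet the relative interior of $F$, one pins $K$ down near $F$; the identification $K=K_L(V)$ then propagates through the ridges of $\DV(2L)$, using the analogous symmetry of their $(d-2)$-faces, as long as $\rl(V)$ is small enough that no third cell intrudes at a ridge. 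The range of $V$ for which this ridge bound holds is packaged by Corollary~\ref{small-radii-theorem}, the lattices for which that range already exhausts $(\lambda(L),\mu(2L)]$ by Corollary~\ref{coro_small-radii-theorem}, and the finer ridge argument needed for the remaining $V$ by Proposition~\ref{prop:ridge}.

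It then remains to run through the list. For $d=2$ the statement is \cite{hern-scott}, and it is in any case recovered by the argument above; for $d=3$ one checks the criterion on the five Fedorov combinatorial types of $\DV(2L)$, where the ridges are edges and the verification is short. For $\Z^d$ one has $\DV(2\Z^d)=[-1,1]^d$, $\lambda(\Z^d)=1$ and $\mu(2\Z^d)=\sqrt{d}$, and $B_d(r)\cap[-1,1]^d$ successively truncates the corners of the cube, a structure on which the facet/ridge analysis is transparent; the reciprocal of $\Z^d$ is again $\Z^d$. For the root lattices $\mathsf{A}_d$, $\mathsf{D}_d$, $\mathsf{E}_6,\mathsf{E}_7,\mathsf{E}_8$, the Leech lattice $\Lambda_{24}$, and all their reciprocals, one feeds the Voronoi-relevant vectors, the covering radius, and the ridge combinatorics recorded in \cite{cs-1988} into the criterion, reducing the problem to comparing $\mu(2L)$ with the distance from $\vec{0}$ to the relevant ridges of $\DV(2L)$. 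I expect this last verification --- in particular for $\Lambda_{24}$ and $\mathsf{E}_8$, whose Voronoi cells have elaborate face lattices --- to be the main obstacle, and to account for the bulk of the work in Section~\ref{sec:equality-cases}.
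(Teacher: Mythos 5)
Your overall strategy coincides with the paper's: dispose of the range $\rl(V)\le\lambda(L)$ via the classical isodiametric inequality, and reduce the rest to the facet--vertex criterion of Corollary~\ref{coro_small-radii-theorem} together with the $6$-belt argument of Proposition~\ref{prop:ridge}. The problem is that the proof of Theorem~\ref{t:sol_lattices} \emph{is} the lattice-by-lattice verification of those criteria, and that is exactly the part you defer (``I expect this last verification \dots to account for the bulk of the work''). Moreover, what is needed there is not bulk computation with Voronoi-relevant vectors and ridge distances, but a few specific structural observations that you do not supply. For $\Z^d$, $\mathsf{A}_d$, $\mathsf{D}_d$ and $\mathsf{E}_d$ the paper simply notes that the automorphism group acts transitively on the facet centers of $\DV(L)$, which already forces every facet of $\DV(2L)$ to contain a vertex of norm $2\mu(L)$. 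For $\Lambda_{24}$ and $\mathsf{D}_d^*$ the key fact is that a deep-hole vertex of type $\mathsf{A}_{24}$, of norm $\mu(\Lambda_{24})$, is incident to facets from both orbits (the $275$ facets of squared length $4$ and the $25$ of squared length $6$), and transitivity on each orbit again yields the criterion. For the two-dimensional lattices and for $\mathsf{A}_d^*$, $\mathsf{E}_6^*$, $\mathsf{E}_7^*$ one uses that \emph{every} vertex of $\DV(2L)$ has norm $2\mu(L)$. None of this is ``fed into the criterion'' automatically; each case rests on a different piece of structure.

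The three-dimensional case also needs more than ``the verification is short.'' Four of Fedorov's five combinatorial types are handled by partitioning the vertices into equivalence classes under lattice translation and point reflection (classes have constant norm, and opposite vertices of a facet lie in the same class), but the truncated octahedron genuinely fails the hypothesis of Corollary~\ref{coro_small-radii-theorem} for some lattices: the paper's Remark exhibits a Selling-parameter example in which a quadrilateral facet has all vertices of norm strictly less than $\mu(L)$. That is precisely why Proposition~\ref{prop:ridge} is needed, and applying it requires the incidence count that the truncated octahedron has three vertex classes of eight vertices, every hexagonal facet meets all three classes, and each class meets four of the six quadrilateral facets --- so that if only one class attains $2\mu(L)$, Corollary~\ref{small-radii-theorem} fixes all facet hyperplanes except possibly one pair of quadrilateral ones, which lie in $6$-belts and are then forced by Proposition~\ref{prop:ridge}. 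Without these concrete arguments the proposal is a plan rather than a proof. (A smaller inaccuracy: Proposition~\ref{prop:ridge} is not about $\rl(V)$ being ``small enough that no third cell intrudes at a ridge''; it is a volume-comparison across a facet of a $6$-belt whose hypothesis is that all \emph{other} supporting hyperplanes $G_{\vec{z}}$ of $K$ already coincide with the facet hyperplanes $H_{\vec{z}}$ of $\DV(2L)$.)
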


\section{Proof of Theorem~\ref{satz1}}\label{sec:proofs}

Our proof of Theorem~\ref{satz1} relies on an equivalent point of view.
Instead of minimizing the diameter among all convex bodies in $\MK_L$ with
fixed volume, we maximize the volume among all convex bodies in $\MK_L$
with fixed diameter. It turns out that extremal bodies also maximize
volume among all convex bodies in $\MK_L$ with fixed diameter.

For any $V \in (0, 2^d \det L]$ the minimum diameter of a body
$K\in\MK_L$ with volume $V$ is at most the diameter of $\DV(2L)$,
which is equal to $4\mu(L)$. Theorem~\ref{satz1} is equivalent to the
following statement.

\begin{theorem}\label{satz2}
Let $L\in\ML^d$ be a lattice and let $D$ be at most $4\mu(L)$.
Define $V$ by $2\rl(V)=D$. Then for every $K \in \MK_L$ with $\diam
K = D$ the inequality $\vol K\leq\vol K_L(V)$ holds.
\end{theorem}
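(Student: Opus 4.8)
The plan is to prove Theorem~\ref{satz2}, which by the scaling relations $\vol(\alpha K)=\alpha^d\vol K$ and $\diam(\alpha K)=\alpha\diam K$ is equivalent to Theorem~\ref{satz1}. Fix $K\in\MK_L$ with $\diam K=D$, so $K\subset B_d(D/2)=B_d(\rl(V))$. The key structural observation is that, since $K$ contains no non-zero lattice point in its interior and is centrally symmetric, the interiors of the translates $\vec{\ell}+K$ for $\vec{\ell}\in 2L$ are pairwise disjoint: if $\vec{x}$ lay in the interior of both $\vec{\ell}_1+K$ and $\vec{\ell}_2+K$ with $\vec{\ell}_1\neq\vec{\ell}_2$ in $2L$, then $\vec{x}-\vec{\ell}_1$ and $\vec{x}-\vec{\ell}_2$ are interior points of $K=-K$, so by convexity their midpoint $\frac12(\vec{\ell}_1+\vec{\ell}_2) - \vec{\ell}_1 + \tfrac12(\vec\ell_1+\vec\ell_2)$—more cleanly, $\tfrac12\big((\vec x-\vec\ell_1)+(\vec\ell_2-\vec x)\big)=\tfrac12(\vec\ell_2-\vec\ell_1)$ is an interior point of $K$; but $\tfrac12(\vec\ell_2-\vec\ell_1)\in L\setminus\{\vec 0\}$, a contradiction. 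Hence $\{\vec\ell+K:\vec\ell\in 2L\}$ is a packing of $\E^d$ by translates of $K$, with density $\vol K/\det(2L)=\vol K/(2^d\det L)$.

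The second ingredient is to localize this packing to the Dirichlet–Voronoi cell $\DV(2L)$. Since $\DV(2L)$ tiles $\E^d$ under translations by $2L$, for any body $K$ one has $\vol K=\sum_{\vec\ell\in 2L}\vol\big((\vec\ell+K)\cap\DV(2L)\big)=\vol\Big(\bigcup_{\vec\ell\in 2L}\big((K-\vec\ell)\cap(\DV(2L)-\vec\ell)\big)\Big)$; equivalently, folding the packing into one fundamental cell, $\vol K=\vol\big(\pi(K)\big)$ where $\pi:\E^d\to\DV(2L)$ is the natural projection modulo $2L$ (which is volume-preserving on sets whose $2L$-translates pack, since no overlaps occur). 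Thus $\pi(K)\subset\DV(2L)$ and $\vol\pi(K)=\vol K$. Now I use $K\subset B_d(\rl(V))$: is $\pi(K)\subset B_d(\rl(V))$? Not directly, since $\pi$ moves points by lattice vectors. The right move is instead the reverse: I want to compare $K$ with $K_L(V)=B_d(\rl(V))\cap\DV(2L)$ and show $\vol K\le\vol K_L(V)$.

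So here is the cleaner route, which I expect to be the actual argument. Decompose $K=\bigcup_{\vec\ell\in 2L}\big(K\cap(\vec\ell+\DV(2L))\big)$ and translate each piece back: $\vol K=\sum_{\vec\ell\in 2L}\vol\big((K-\vec\ell)\cap\DV(2L)\big)$. Each set $(K-\vec\ell)\cap\DV(2L)$ sits inside $\DV(2L)$, and because the $2L$-translates of $K$ pack, these sets have pairwise disjoint interiors; moreover their union lies in $\DV(2L)$, so $\vol K\le\det(2L)=2^d\det L$, recovering Minkowski—but I need the sharper bound using the diameter. The point is that each piece $(K-\vec\ell)\cap\DV(2L)$ is contained in $(B_d(\rl(V))-\vec\ell)\cap\DV(2L)$, a ball of radius $\rl(V)$ centered at $-\vec\ell$, intersected with $\DV(2L)$. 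Summing volumes and using disjointness of interiors, $\vol K\le\sum_{\vec\ell\in 2L}\vol\big(B_d(-\vec\ell,\rl(V))\cap\DV(2L)\big)$. By translating $\DV(2L)$ by $\vec\ell$ instead, each term equals $\vol\big(B_d(\rl(V))\cap(\vec\ell+\DV(2L))\big)$, and summing over $\vec\ell\in 2L$ gives exactly $\vol\big(B_d(\rl(V))\cap\bigcup_{\vec\ell}(\vec\ell+\DV(2L))\big)=\vol B_d(\rl(V))$ when $\rl(V)$ is small, but in general the union is all of $\E^d$ so this sum is just $\vol B_d(\rl(V))$—which is the \emph{wrong} (too weak) bound. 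The resolution is that the pieces of $K$ are not arbitrary: each piece lies in $K-\vec\ell\subset B_d(\rl(V))-\vec\ell$, \emph{and} they must reassemble into the single convex body $K$; equivalently, after folding, $\pi(K)\subset\DV(2L)$ has full volume $\vol K$, and $\pi(K)\subset\pi\big(B_d(\rl(V))\big)$. The crucial geometric fact, which is the heart of the matter, is that $\pi\big(B_d(\rl(V))\big)=\DV(2L)\cap B_d(\rl(V))\cdot(\text{something})$—more precisely, one shows $\vol\,\pi\big(B_d(r)\big)=\vol\big(B_d(r)\cap\DV(2L)\big)=\vol K_L(V)$ whenever $r=\rl(V)$, because the "overflow" of the ball outside $\DV(2L)$ folds back onto points already covered (using that $\DV(2L)$ is a parallelohedron whose facet-vectors are in $2L$, and that the ball is centrally symmetric and centered at a lattice point of $2L$). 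Granting this, $\vol K=\vol\pi(K)\le\vol\pi\big(B_d(\rl(V))\big)=\vol K_L(V)$, as desired.

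The main obstacle, then, is establishing the identity $\vol\,\pi\big(B_d(r)\big)=\vol\big(B_d(r)\cap\DV(2L)\big)$, i.e. that folding the ball $B_d(r)$ into the fundamental cell $\DV(2L)$ does not increase its volume beyond that of the plain intersection $K_L(V)$. Equivalently: among all ways to cover $\DV(2L)$ by a measurable set $S$ with $\vol S\le\vol K$ whose $2L$-translates pack, the intersection $B_d(\rl(V))\cap\DV(2L)$ covers the largest-volume subset of $\DV(2L)$, because the ball is the farthest-reaching centrally symmetric packing body per unit volume. I expect the proof to proceed pointwise on $\DV(2L)$: for $\vec x\in\DV(2L)$, compare whether $\vec x\in\pi(K)$ (equivalently $\vec x\equiv\vec k\pmod{2L}$ for some $\vec k\in K$) versus $\vec x\in K_L(V)$ (equivalently $\|\vec x\|\le\rl(V)$), using that for the packing body $K\subset B_d(\rl(V))$ the coset representative in $K$ has norm $\le\rl(V)$ while for the DV-cell the representative of minimal norm \emph{is} the one in $\DV(2L)$ by definition of the Voronoi cell. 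This should give $\pi(K)\subset$ (a set of the same volume as) $K_L(V)$ via a measure-preserving rearrangement, or more directly an injection-up-to-measure-zero argument comparing fibers of $\pi$ over $\DV(2L)$. The remaining steps—that $\rl(V)$ is well-defined and increasing in $V$, that $\vol\partial(\text{bodies})=0$ so "disjoint interiors" upgrades to additivity of volume, and the reduction of Theorem~\ref{satz1} to Theorem~\ref{satz2} via scaling—are routine.
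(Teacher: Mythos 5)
Your proposal is correct and is essentially the paper's argument: fold $K$ into $\DV(2L)$ using that $2L$ is a packing lattice for $K$ (admissibility of $L$ plus central symmetry), and then observe that the folded image lies in $B_d(\rl(V))\cap\DV(2L)=K_L(V)$. The one step you single out as ``the main obstacle'' is in fact immediate from the definition of the Dirichlet--Voronoi cell --- if $\vec{x}\in\DV(2L)$ and $\vec{x}-\vec{\ell}\in B_d(\rl(V))$ for some $\vec{\ell}\in 2L$, then $\|\vec{x}\|\leq\|\vec{x}-\vec{\ell}\|\leq \rl(V)$, giving the genuine set inclusion $(2L+K)\cap\DV(2L)\subseteq K_L(V)$ --- which is precisely your own pointwise sketch at the end, so no measure-preserving rearrangement or fiber-counting argument is needed.
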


Here we show that Theorem~\ref{satz2} implies Theorem~\ref{satz1}. Using
an analogous argument the other direction can be proved. For $V \in
(0,2^d\det L]$ let $K \in \MK_L$ be a convex body with $\vol K=V$. Suppose
that $\diam K_L(V)>\diam K$. Then Theorem~\ref{satz2} yields a
contradiction:
\[
V=\vol K_L(V)\geq\vol\left(\dfrac{\diam K_L(V)}{\diam
K}K\right)>\vol K=V.
\]
Hence, $\diam K_L(V) \leq \diam K$.

For the proof of Theorem~\ref{satz2} we recall some standard notions
from the Geometry of Numbers. A lattice $L\in\ML^d$ is called {\it
admissible} for a subset $K\subset\E^d$ if $K$ has no lattice point
except the origin in its interior. In particular the lattice $L$ is
admissible for all convex bodies in $\MK_L$. On the other hand, a
lattice $L\in\ML^d$ is called {\it packing lattice} for $K$ if ${\rm
int}(K+\vec{x})\cap{\rm int}(K+\vec{y})=\emptyset$ for all distinct
$\vec{x},\vec{y}\in L$. Then for a centrally symmetric convex body
$K$, a lattice $L$ is a packing lattice for $K$ if and only if it is
admissible for $2K$ (see e.g.~\cite[\S 20, Th.~1]{gru-lek}).

\begin{proof}[Proof of Theorem~\ref{satz2}]
As mentioned above, the case $\rl(V) \leq \lambda(L)$ is covered by
the classical isodiametric inequality without lattice-point
constraints. Thus we suppose $\rl(V)>\lambda(L)$.

Let $K \in \MK_L$ be a convex body with $\diam K=2\rl(V)$. By definition
of $\MK_L$ the lattice $L$ is admissible for $K$. Therefore $2L$ is a
packing lattice for~$K$. Since $\DV(2L)$ is a parallelohedron,
\[
\vol K=\vol\bigl((2L+K)\cap \DV(2L)\bigr)
\]
(roughly speaking, the Dirichlet-Voronoi cell $\DV(2L)$ contains the
full set $K$ ``in pieces"). On the other hand, since $\diam K=2\rl(V)$
we have $2\vec{y}+K\subseteq 2\vec{y}+B_d\bigl(\rl(V)\bigr)$ for all
$\vec{y}\in L$. Then,
\begin{equation}
\label{2lk} (2L+K)\cap\DV(2L)\subseteq
\bigl[2L+B_d\bigl(\rl(V)\bigr)\bigr] \cap \DV(2L).
\end{equation}
The right hand side of \eqref{2lk} is equal to
$B_d\bigl(\rl(V)\bigr)\cap\DV(2L)=K_L(V)$, because if $\vec{x}\in
\bigl[2\vec{y} + B_d\bigl(\rl(V)\bigr)\bigr]\cap \DV(2L)$ for
$\vec{y} \in L$, then by the definition of the Dirichlet-Voronoi
cell we have $\|\vec{x}\| \leq \|\vec{x} - 2\vec{y}\| \leq r_L(V)$.
Therefore, $\vol K_L(V)\geq\vol\bigl((2L+K)\cap \DV(2L)\bigr)=\vol
K$, as required.
\end{proof}

\section{Equality Cases}\label{sec:equality-cases}

In this section we further investigate equality cases. We give
geometric conditions on the vertices (Corollary
\ref{coro_small-radii-theorem}) and on the facets (Proposition
\ref{prop:ridge}) of the Dirichlet-Voronoi cell $\DV(2L)$ which
assure the validity of Conjecture~\ref{conj1} for the corresponding
lattice $L$. As a consequence of these results we obtain a proof of
Theorem \ref{t:sol_lattices}.

Let $L\in\ML^d$ be a lattice and let $V \in (0,2^d\det L]$. Then
$\rl(V)\leq 2\mu(L)$.  We define
\begin{equation}\label{eqn:MCR}
\MCR=\bigl\{\vec{x}\in\E^d:\|\vec{x}-\vec{y}\|>\rl(V),\mbox{ for all
} \vec{y} \in 2L\bigr\}.
\end{equation}

\begin{lemma}
Let $K\in\MK_L$ be an extremal body. Then
\begin{enumerate}
\item[i)] $\E^d =\bigl(2L + K_L(V)\bigr) \cup{\MCR}$,
\item[ii)] the intersection $(2L + K)\cap\MCR$ is empty,
\item[iii)] $\E^d=(2L+K)\cup{\MCR}$.
\end{enumerate}
\end{lemma}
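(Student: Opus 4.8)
The plan is to prove the three parts in order, deducing (iii) from (i) and (ii), so the real work is establishing (i) and (ii).

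For part (i), observe that $\MCR$ is exactly the complement of the set $2L + B_d(\rl(V))$ by its very definition in \eqref{eqn:MCR}: a point $\vec x$ lies in $2L + B_d(\rl(V))$ iff $\|\vec x - \vec y\| \leq \rl(V)$ for some $\vec y \in 2L$, which is the negation of the condition defining $\MCR$. Hence $\E^d = \bigl(2L + B_d(\rl(V))\bigr) \cup \MCR$. Since $\DV(2L)$ tiles $\E^d$ under translations by $2L$, every point of $2L + B_d(\rl(V))$ can be moved by a lattice vector of $2L$ into $\DV(2L)$; combining this with the key identity from the proof of Theorem~\ref{satz2}, namely that $\bigl[2L + B_d(\rl(V))\bigr] \cap \DV(2L) = B_d(\rl(V)) \cap \DV(2L) = K_L(V)$, one gets $2L + B_d(\rl(V)) = 2L + K_L(V)$. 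Substituting this into the covering identity above yields (i).

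For part (ii), I would use that $K$ is \emph{extremal}, so $\vol K = \vol K_L(V)$. From the proof of Theorem~\ref{satz2} we know $\vol K = \vol\bigl((2L+K)\cap\DV(2L)\bigr)$ (because $2L$ is a packing lattice for $K$ and $\DV(2L)$ is a parallelohedron) and also $(2L+K)\cap\DV(2L) \subseteq K_L(V)$ via \eqref{2lk}. Equality of volumes then forces $(2L+K)\cap\DV(2L)$ to fill up $K_L(V)$ up to measure zero, and since both sets are (finite unions of) convex bodies one can upgrade this to $\overline{(2L+K)\cap\DV(2L)} = K_L(V)$, or at least that the closure of $2L+K$ contains $2L + \mathrm{int}\,K_L(V)$. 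Now suppose for contradiction that some $\vec x \in (2L+K)\cap\MCR$. Translating by $2L$ we may assume $\vec x \in \DV(2L)$, so $\vec x \in (2L+K)\cap\DV(2L) \subseteq K_L(V) = B_d(\rl(V))\cap\DV(2L)$, giving $\|\vec x\| \leq \rl(V)$; but $\vec 0 \in 2L$, so $\vec x \in \MCR$ forces $\|\vec x - \vec 0\| > \rl(V)$, a contradiction. Thus $(2L+K)\cap\MCR = \emptyset$.

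Finally, part (iii) follows by combining (i) and (ii): since $(2L+K)\cap\MCR=\emptyset$ we have $\MCR \subseteq \E^d \setminus (2L+K)$, and since $K \supseteq$ (a translate of) enough of $K_L(V)$ — more precisely, extremality gives $2L + K \supseteq 2L + K_L(V)$ up to the boundary subtleties handled above — we get $(2L+K) \cup \MCR \supseteq (2L+K_L(V))\cup\MCR = \E^d$ by (i). The main obstacle I anticipate is the measure-theoretic bookkeeping in part (ii): passing from ``equal volume'' to an honest set-theoretic containment $2L+K \supseteq 2L+K_L(V)$ requires care, since a priori the sets could differ on a null set; one resolves this by noting that $(2L+K)\cap\DV(2L)$ is a finite union of convex pieces whose union has the same volume as the convex body $K_L(V)$ and is contained in it, which forces the containment to be an equality of closures, and then one argues that the relevant inclusions for (ii) and (iii) only need $2L+K$ to contain the interior points, which is safe.
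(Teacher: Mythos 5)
Your proposal is correct and follows essentially the same route as the paper: part (i) is the nearest-lattice-point/tiling argument, part (ii) ultimately reduces to $K \subseteq B_d\bigl(\rl(V)\bigr)$ (which the paper notes is immediate from $\diam K = 2\rl(V)$, so your preliminary volume discussion there is unnecessary), and part (iii) rests on the volume identity from Theorem~\ref{satz2} plus extremality. The only stylistic difference is that in (iii) you derive the direct containment $2L+K \supseteq 2L+K_L(V)$ from the full-measure-plus-closedness argument, whereas the paper argues by contradiction with a missed set of positive volume; both require the same small topological step and are equally valid.
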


\begin{proof}
For every $\vec{x} \in \E^d$ there exists $\vec{y}\in L$ such that
$\vec{x}\in2\vec{y}+\DV(2L)$. From the definition of $\DV(2L)$ we
know that $2\vec{y}$ is a nearest point of the lattice $2L$ to
$\vec{x}$. If $\vec{x}\not\in2\vec{y}+K_L(V)$, then
$\vec{x}\not\in2\vec{y}+B_d\bigl(\rl(V)\bigr)$. Consequently, since
$2\vec{y}$ is a nearest point to $\vec{x}$, we have $\vec{x}\not\in
2L+B_d\bigl(\rl(V)\bigr)$. Hence, $\vec{x}\in\MCR$. This shows i).

ii) is obvious since $\diam K=2\rl(V)$.

For iii) suppose that $(2L+K)\cup{\MCR}$ is strictly contained in
$\E^d$. Then there exists a set $B$ with positive volume such that
$2L+B$ does not intersect $(2L+K)\cup{\MCR}$. The latter is
invariant with respect to translations of $2L$ which contradicts the
extremality of $K$.
\end{proof}

Let $K \in \MK_L$ be an extremal body with respect to the lattice
$L$ and a value $V \in (0,2^d\det L]$. Since $2L$ is admissible for
$K$, there exists a closed halfspace $G_{\vec{y}}^-$ for every
$\vec{y} \in L \setminus \{\vec{0}\}$ with bounding hyperplane
$G_{\vec{y}}$ through $\vec{y}$ so that $K \subseteq G_{\vec{y}}^-$.
Since $K$ is contained in $B_d\bigl(\rl(V)\bigr)$, we have the
representation
\begin{equation}
\label{eqn:extremalK}
K=\bigcap_{\substack{\vec{y}\in L\setminus\{\vec{0}\},\\[1mm]
\|\vec{y}\| < \rl(V)}} G_{\vec{y}}^- \cap B_d\bigl(\rl(V)\bigr).
\end{equation}
Notice that we do not have to care about separation of lattice points
outside or on the boundary of $B_d\bigl(\rl(V)\bigr)$.

We now consider those $\vec{y}\in L \setminus \{\vec{0}\}$ with
$\|\vec{y}\|<\rl(V)$ which are facet-centers of the Dirichlet-Voronoi cell
$\DV(2L)$. Notice that all the facets of $\DV(2L)$ have a center since
they are centrally symmetric. In some cases we can prove that facet
defining hyperplanes of $\DV(2L)$ coincide with those of~$K$. In the
following $C_L\subset L$ denotes the set of lattice points being centers
of facets of $\DV(2L)$. Let the facets of $\DV(2L)$ be defined by
hyperplanes $H_{\vec{y}}$ passing through $\vec{y}\in C_L$ and let
$H_{\vec{y}}^-$ denote the corresponding closed halfspaces bounded by
$H_{\vec{y}}$ and containing $\DV(2L)$, so that
\begin{equation}
\label{eqn:dv2l}
\DV(2L) = \bigcap_{\vec{y} \in C_L} H_{\vec{y}}^-
\end{equation}
is a non-redundant description of $\DV(2L)$.

The following proposition allows to prove Conjecture \ref{conj1} for
many values of $V$ and many lattices.

\begin{proposition}\label{prop:equalfacets}
Let $L\in\ML^d$ be a lattice and let $D$ be at most $4\mu(L)$.
Define $V$ by $2\rl(V)=D$. Let $K\in\MK_L$, given as in
\eqref{eqn:extremalK}, be extremal with respect to $L$ and $V$, and
let $\vec{y}\in C_L$ with $\|\vec{y}\| < \rl(V)$. If the facet
$F_{\vec{y}} = H_{\vec{y}}\cap\DV(2L)$ of $\DV(2L)$ intersects
$\E^d\setminus B_d\bigl(\rl(V)\bigr)$ then
$G_{\vec{y}}=H_{\vec{y}}$.
\end{proposition}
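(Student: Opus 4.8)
The plan is to argue by contradiction: suppose the facet-defining hyperplane $H_{\vec{y}}$ of $\DV(2L)$ does not coincide with the separating hyperplane $G_{\vec{y}}$ of $K$ at $\vec{y}$. Since both hyperplanes pass through $\vec{y}$ and $K\subseteq G_{\vec{y}}^-$ while $\DV(2L)\subseteq H_{\vec{y}}^-$, a genuine difference means that $G_{\vec{y}}^-$ cuts strictly deeper into the region on the far side of $H_{\vec{y}}$ (the side containing the ``mirror'' cell $2\vec{y}+\DV(2L)$). Concretely, there is a point $\vec{x}$ lying in the open halfspace $H_{\vec{y}}^+ \setminus G_{\vec{y}}^-$, i.e., on the $\DV(2L)$-side of $G_{\vec{y}}$ but on the opposite side of $H_{\vec{y}}$, which we will show must lie in the uncovered region $\MCR$, contradicting part iii) of the Lemma once we produce a whole set of positive volume of such points.

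First I would use the hypothesis that $F_{\vec{y}}$ meets $\E^d\setminus B_d(\rl(V))$: pick a point $\vec{p}$ in the relative interior of $F_{\vec{y}}$ with $\|\vec{p}\|>\rl(V)$ (such $\vec{p}$ exists because $F_{\vec{y}}$ is $(d-1)$-dimensional and the open exterior of the ball is open). Because $F_{\vec{y}}$ is centrally symmetric about $\vec{y}$, the reflected point $2\vec{y}-\vec{p}$ also lies on $F_{\vec{y}}$. Now suppose for contradiction that $G_{\vec{y}}\ne H_{\vec{y}}$. Then near $\vec{p}$ the body $K$ is cut off strictly before reaching $H_{\vec{y}}$, so a small neighborhood $U$ of $\vec{p}$ in $H_{\vec{y}}^+$ lies outside $G_{\vec{y}}^-\supseteq K$ and hence outside $K$; shrinking $U$ we may also assume $U$ lies outside the ball $B_d(\rl(V))$, so $U\cap K_L(V)=\emptyset$ as well. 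The key claim is that such a $U$ can be chosen disjoint from $2L+K$: a point $\vec{x}\in\DV(2L)$ with $\|\vec{x}\|>\rl(V)$ is not in $K$ (as $K\subseteq B_d(\rl(V))$) and is not in any translate $2\vec{z}+K$ with $\vec{z}\in L\setminus\{\vec0\}$ either, since $\|\vec{x}-2\vec{z}\|\geq\|\vec{x}\|>\rl(V)$ by the defining property of $\DV(2L)$ — and the only translate of $K$ by $2L$ that could be in play on the $H_{\vec{y}}^+$ side is $2\vec{y}+K$, which by $G_{\vec{y}} \neq H_{\vec{y}}$ also fails to reach $U$. Hence $U\cap(2L+K)=\emptyset$. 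But then $U\subseteq\MCR$ by definition \eqref{eqn:MCR}, so $U$ is a positive-volume subset of $\MCR\setminus(2L+K)$, contradicting $\E^d=(2L+K)\cup\MCR$ — wait, that alone is not a contradiction; the contradiction is rather with part i) combined with part iii): from iii), $U\subseteq(2L+K)\cup\MCR$, and we have just shown $U\cap(2L+K)=\emptyset$, forcing $U\subseteq\MCR$, which is consistent, so the real leverage must come from comparing with $K_L(V)$ via the extremality/volume identity $\vol K=\vol K_L(V)=\vol((2L+K)\cap\DV(2L))$.

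So the cleaner route, which I expect to be the one to push through, is volumetric: by the proof of Theorem~\ref{satz2} we have $\vol K=\vol\bigl((2L+K)\cap\DV(2L)\bigr)$ and $(2L+K)\cap\DV(2L)\subseteq K_L(V)$ with equal volumes, hence $(2L+K)\cap\DV(2L)=K_L(V)$ up to a null set. In particular $\DV(2L)\cap B_d(\rl(V))=K_L(V)\subseteq \overline{2L+K}$. Now take the neighborhood $U\subseteq\DV(2L)\cap H_{\vec{y}}^+$ of $\vec{p}$ as above with $U\cap(2\vec{y}+K)=\emptyset$ (using $G_{\vec{y}}\ne H_{\vec{y}}$) and $U\cap 2\vec{z}+K=\emptyset$ for all other $\vec{z}\in L$ (using $\|\vec{x}-2\vec{z}\|\geq\|\vec x\|$ on $\DV(2L)$, together with the fact that the points of $U$ that are inside $B_d(\rl(V))$ would have to lie in $K_L(V)=(2L+K)\cap\DV(2L)$ only through the translate $2\vec0+K=K$ or $2\vec y+K$, both excluded near $\vec p$). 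This yields $U\cap(2L+K)=\emptyset$ while $U$ contains points of $B_d(\rl(V))\cap\DV(2L)=K_L(V)\subseteq\overline{2L+K}$ of positive measure — the contradiction. The main obstacle, and where the argument needs care, is the bookkeeping of \emph{which} lattice translates $2\vec{z}+K$ can reach the chosen neighborhood $U$: I must choose $U$ small enough (a) to stay strictly inside the relative interior of the facet $F_{\vec y}$ so that $\DV(2L)$ only "uses" the two cells across $H_{\vec y}$, (b) to straddle the ball boundary $\|\vec x\|=\rl(V)$ so that $U$ genuinely meets $K_L(V)$, and (c) to avoid $2\vec y+K$, which is exactly where the hypothesis $G_{\vec y}\ne H_{\vec y}$ enters. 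Making (a)–(c) simultaneously achievable is the crux; everything else is the definitional unwinding of $\DV(2L)$ and $\MCR$ already set up in the Lemma.
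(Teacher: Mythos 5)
There is a genuine gap, and it is exactly the point you yourself flag as ``the crux'' and leave unresolved: the positive-volume set $U$ is never actually constructed. Your candidate is a small neighborhood of a point $\vec{p}$ in the relative interior of $F_{\vec{y}}$ with $\|\vec{p}\|>\rl(V)$; but any sufficiently small neighborhood of such a point lies entirely outside $B_d\bigl(\rl(V)\bigr)$, hence entirely outside $K_L(V)$, so it cannot witness a positive-measure violation of $K_L(V)\subseteq 2L+K$. If instead you enlarge $U$ until it ``straddles'' the sphere, none of your disjointness claims survive without new estimates, and you give none. The repair (which is what the paper does) is to slide along the facet from the far point toward the center: on the segment from $\vec{p}$ (or a vertex $\vec{x}$ with $\|\vec{x}\|>\rl(V)$) to $\vec{y}$ there is a point $\vec{z}$ with $\|\vec{z}\|=\rl(V)$, and the perturbed point $\vec{c}=\vec{z}-\varepsilon\vec{y}$ is simultaneously interior to $\DV(2L)$, interior to $B_d\bigl(\rl(V)\bigr)$, in $G_{\vec{y}}^+$, and at distance $>\rl(V)$ from every point of $2L\setminus\{\vec{0}\}$; a small ball around $\vec{c}$ is then the required set. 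Your overall volumetric framework (extremality forces $(2L+K)\cap\DV(2L)=K_L(V)$ up to null sets, so exhibiting an open subset of $K_L(V)$ missing $2L+K$ is a contradiction) is sound and is just a reformulation of the paper's ``adjoin a small ball $B$ to $K$ and rerun the packing argument''; only the geometric construction is missing.

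Two secondary problems should also be fixed. First, $\DV(2L)\cap H_{\vec{y}}^+=\emptyset$ since $\DV(2L)\subseteq H_{\vec{y}}^-$; you presumably mean $G_{\vec{y}}^+$. Second, your exclusion of the translate $2\vec{y}+K$ ``using $G_{\vec{y}}\neq H_{\vec{y}}$'' is not justified: by central symmetry of $K$ one only gets $2\vec{y}+K$ contained in the \emph{closure} of $G_{\vec{y}}^+$, so membership of $U$ in $G_{\vec{y}}^+$ does not separate $U$ from $2\vec{y}+K$ at all. The actual mechanism is metric: the chosen center must satisfy $\|\vec{c}-2\vec{y}\|>\rl(V)$, which the perturbation $-\varepsilon\vec{y}$ (moving away from $2\vec{y}$, since $\langle\vec{z}-2\vec{y},\vec{y}\rangle=-\|\vec{y}\|^2<0$) guarantees. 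Relatedly, to place $\vec{p}$ (or its reflection $2\vec{y}-\vec{p}$) in $G_{\vec{y}}^+$ without loss of generality you need the observation that reflection in $\vec{y}$ preserves norms on $H_{\vec{y}}$ (points of $H_{\vec{y}}$ are equidistant from $\vec{0}$ and $2\vec{y}$), so the reflected point still has norm $>\rl(V)$; you mention the reflected point but never use it to justify the side condition you subsequently assume.
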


\begin{proof}
Suppose $G_{\vec{y}}\neq H_{\vec{y}}$ for some $\vec{y}\in C_L$
satisfying the assumptions of the proposition.
We construct a ball $B$ having the following properties:
\begin{enumerate}
\item[i)] $B\subset K_L(V)$,
\item[ii)] $B\cap K=\emptyset$,
\item[iii)] $B\cap\bigl[2L \setminus \{\vec{0}\}+ B_d\bigl(\rl(V)\bigr)\bigr]=\emptyset$.
\end{enumerate}
Then the lattice $2L$ is a packing lattice for $K \cup B$. Using the
argument which was applied in the proof of Theorem~\ref{satz2} we
obtain the strict inequality $\vol K<\vol(K\cup B)<\vol K_L(V)$,
which contradicts the assumption that $K$ is extremal.

It remains to construct $B$. Since the facet
$F_{\vec{y}}$ of $\DV(2L)$ intersects $\E^d\setminus
B_d\bigl(\rl(V)\bigr)$ by our hypothesis, there exists a vertex
$\vec{x}$ of $F_{\vec{y}}$ with $\|\vec{x}\|>\rl(V)$ (see
Figure \ref{fig1}).

\begin{figure}[ht]
\begin{center}
\includegraphics[width=6.5cm]{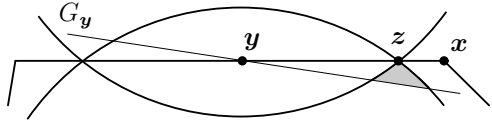}
\caption{The ball $B$ lies inside the shaded region.}\label{fig1}
\end{center}
\end{figure}

Since $F_{\vec{y}}$ is centrally symmetric with respect to
$\vec{y}$, either $\vec{x}$ or $2\vec{y}-\vec{x}$ lies in the open
halfspace $G_{\vec{y}}^+ = \E^d \setminus G_{\vec{y}}^-$. Without
loss of generality we assume $\vec{x} \in G_{\vec{y}}^+$.

On the line segment connecting $\vec{x}$ and $\vec{y}$ there is a
point $\vec{z}$ with $\|\vec{z}\| = \rl(V)$ (see Figure \ref{fig1}).
We have $\|\vec{z}-2\vec{y}\|=\rl(V)$ and,
since $\vec{z}$ lies in the relative interior of a facet of a
Dirichlet-Voronoi cell, for any other $\vec{y}'\in L \setminus\{\vec{0},\vec{y}\}$ it
holds $\|\vec{z}-2\vec{y}'\| > \rl(V)$. Let
$\vec{c}=\vec{z}-\varepsilon \vec{y}$ where $\varepsilon>0$ is
chosen so that $\vec{c}$ lies in the interior of
$\DV(2L)\cap G_{\vec{y}}^+ \cap B_d\bigl(\rl(V)\bigr)$. Thus there exists
$\delta>0$ sufficiently small such that the ball $B$ centered in
$\vec{c}$ with radius $\delta$ satisfies properties i)--iii).
\end{proof}

The following result is an immediate consequence of Proposition
\ref{prop:equalfacets}.
\begin{corollary}\label{small-radii-theorem}
Let $L\in\ML^d$ be a lattice and let $D$ be at most $4\mu(L)$. Define
$V$ by $2\rl(V)=D$.
If every facet of $\DV(2L)$ contains a vertex $\vec{x}$
with $\|\vec{x}\|>\rl(V)$, then $K_L(V)$ is the unique extremal body
with respect to $L$ and $V$.
\end{corollary}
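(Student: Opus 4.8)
The plan is to feed the hypothesis into Proposition~\ref{prop:equalfacets} and then read off that an extremal body must coincide with $K_L(V)$. So let $K\in\MK_L$ be an arbitrary extremal body with respect to $L$ and $V$. As recalled just before Proposition~\ref{prop:equalfacets}, $K$ then has the shape \eqref{eqn:extremalK}; in particular $K\subseteq B_d\bigl(\rl(V)\bigr)$ and $K\subseteq G_{\vec{y}}^-$ for every $\vec{y}\in L\setminus\{\vec{0}\}$ with $\|\vec{y}\|<\rl(V)$, and moreover $\vol K=V=\vol K_L(V)$. Since two full-dimensional convex bodies of the same volume, one of which is contained in the other, must be equal, and since $K_L(V)$ is itself extremal by Theorem~\ref{satz1}, it suffices to prove the inclusion $K\subseteq K_L(V)$.

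First I would handle the facet centers of $\DV(2L)$ that lie strictly inside the ball. Fix $\vec{y}\in C_L$ with $\|\vec{y}\|<\rl(V)$. By assumption the facet $F_{\vec{y}}=H_{\vec{y}}\cap\DV(2L)$ contains a vertex $\vec{x}$ with $\|\vec{x}\|>\rl(V)$, so $F_{\vec{y}}$ meets $\E^d\setminus B_d\bigl(\rl(V)\bigr)$, and Proposition~\ref{prop:equalfacets} yields $G_{\vec{y}}=H_{\vec{y}}$. Because $\vec{0}$ lies both in $K\subseteq G_{\vec{y}}^-$ and in $\DV(2L)\subseteq H_{\vec{y}}^-$, the two closed halfspaces bounded by this common hyperplane agree: $G_{\vec{y}}^-=H_{\vec{y}}^-$.

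Next I would discard the facet centers lying outside the ball. For $\vec{y}\in C_L$ the hyperplane $H_{\vec{y}}$ is the perpendicular bisector of $\vec{0}$ and $2\vec{y}$, hence its distance to the origin equals $\|\vec{y}\|$; so when $\|\vec{y}\|\geq\rl(V)$ one has $B_d\bigl(\rl(V)\bigr)\subseteq H_{\vec{y}}^-$, and such a halfspace is redundant in the description $K_L(V)=B_d\bigl(\rl(V)\bigr)\cap\bigcap_{\vec{y}\in C_L}H_{\vec{y}}^-$ coming from \eqref{eqn:dv2l}. Therefore
\[
K_L(V)=B_d\bigl(\rl(V)\bigr)\cap\bigcap_{\substack{\vec{y}\in C_L\\ \|\vec{y}\|<\rl(V)}}H_{\vec{y}}^-.
\]
Combining this with \eqref{eqn:extremalK} (using $C_L\subseteq L$) and the first step gives
\begin{align*}
K &\subseteq B_d\bigl(\rl(V)\bigr)\cap\bigcap_{\substack{\vec{y}\in C_L\\ \|\vec{y}\|<\rl(V)}}G_{\vec{y}}^- \\
  &= B_d\bigl(\rl(V)\bigr)\cap\bigcap_{\substack{\vec{y}\in C_L\\ \|\vec{y}\|<\rl(V)}}H_{\vec{y}}^- = K_L(V),
\end{align*}
which is exactly the inclusion we needed.

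The argument is short because Proposition~\ref{prop:equalfacets} carries the essential content; I do not expect a genuine obstacle, only two bookkeeping points to get right: that the intersection in \eqref{eqn:extremalK} runs over \emph{all} nonzero lattice points of norm below $\rl(V)$, hence in particular over the facet centers treated in the first step, and the routine observation that facet-defining hyperplanes of $\DV(2L)$ through lattice points of norm at least $\rl(V)$ become redundant once one intersects with $B_d\bigl(\rl(V)\bigr)$. If anything is delicate, it is making sure the index sets in the displayed chain of inclusions line up exactly.
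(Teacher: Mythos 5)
Your proof is correct and follows exactly the route the paper intends: the paper states this corollary as an ``immediate consequence'' of Proposition~\ref{prop:equalfacets} without writing out the details, and your argument is precisely that derivation made explicit (applying the proposition to each facet center $\vec{y}\in C_L$ with $\|\vec{y}\|<\rl(V)$, discarding the redundant halfspaces for $\|\vec{y}\|\geq\rl(V)$, and concluding $K\subseteq K_L(V)$ with equal volumes). The bookkeeping on the index sets is handled correctly, so nothing is missing.
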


So Corollary \ref{small-radii-theorem} proves Conjecture \ref{conj1}
for many values of $V$. We can apply Corollary \ref{small-radii-theorem}
also to prove Conjecture \ref{conj1} for certain classes of lattices and
any value of $V \in (0,2^d\det L]$:

\begin{corollary}\label{coro_small-radii-theorem}
Let $L \in \ML^d$ be a lattice such that every facet of $\DV(2L)$
contains a vertex $\vec{x}$ with $\|\vec{x}\|=2\mu(L)$. Then
$K_L(V)$ is the unique extremal body with respect to $L$ and any $V \in (0,2^d\det L]$.
\end{corollary}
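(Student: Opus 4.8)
The plan is to treat the two ranges $V<2^d\det L$ and $V=2^d\det L$ separately, since only the endpoint can fail to satisfy the hypothesis of Corollary~\ref{small-radii-theorem}. Because $K_L(2^d\det L)=\DV(2L)$ has diameter $4\mu(L)$, we have $\rl(2^d\det L)=2\mu(L)$, and since $\rl$ is increasing in $V$ this gives $\rl(V)<2\mu(L)$ for every $V<2^d\det L$. For such $V$ the assumption provides in each facet of $\DV(2L)$ a vertex $\vec{x}$ with $\|\vec{x}\|=2\mu(L)>\rl(V)$, so Corollary~\ref{small-radii-theorem}, applied with $D=2\rl(V)$, shows immediately that $K_L(V)$ is the unique extremal body.

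It remains to treat $V=2^d\det L$, where $K_L(V)=\DV(2L)$ and $\diam K_L(V)=4\mu(L)$. Let $K\in\MK_L$ be extremal; then $\vol K=2^d\det L$ and $\diam K=4\mu(L)$, i.e. $K\subseteq B_d(2\mu(L))$. Here $\rl(V)=2\mu(L)$ equals the covering radius $\mu(2L)$, so the region $\MCR$ of \eqref{eqn:MCR} is empty and part iii) of the Lemma above yields $\E^d=2L+K$; as $2L$ is a packing lattice for $K$, the translates $2\vec{\ell}+K$, $\vec{\ell}\in L$, tile $\E^d$ (equivalently, $K$ realizes equality in Theorem~\ref{mink-theorem}). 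Moreover every facet center $\vec{y}\in C_L$ satisfies $\|\vec{y}\|<2\mu(L)=\rl(V)$, because the $(d-1)$-dimensional facet $F_{\vec{y}}$ contains points of norm larger than $\|\vec{y}\|$ yet lies in $B_d(2\mu(L))$; hence all halfspaces $G_{\vec{y}}^{-}$ with $\vec{y}\in C_L$ occur among those in the representation \eqref{eqn:extremalK} of $K$. Comparing with the non-redundant description \eqref{eqn:dv2l} of $\DV(2L)$, it now suffices to prove $K\subseteq H_{\vec{y}}^{-}$, i.e. $G_{\vec{y}}=H_{\vec{y}}$, for every $\vec{y}\in C_L$: then $K\subseteq\DV(2L)$, and equality of volumes forces $K=\DV(2L)$.

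To establish $G_{\vec{y}}=H_{\vec{y}}$ I would follow the proof of Proposition~\ref{prop:equalfacets}: assuming $G_{\vec{y}}\ne H_{\vec{y}}$, choose a vertex $\vec{x}$ of $F_{\vec{y}}$ with $\|\vec{x}\|=2\mu(L)$, arrange that $\vec{x}\in\overline{G_{\vec{y}}^{+}}$ using the central symmetry of $F_{\vec{y}}$ about $\vec{y}$, and build a small ball $B$ satisfying $B\subset\DV(2L)$, $B\cap K=\emptyset$, and $B\cap\bigl[(2L\setminus\{\vec{0}\})+B_d(2\mu(L))\bigr]=\emptyset$; then $2L$ is a packing lattice for $K\cup B$, and the volume estimate from the proof of Theorem~\ref{satz2} contradicts the extremality of $K$. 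The essential difference from Proposition~\ref{prop:equalfacets}, and the step I expect to be hardest, is that $\|\vec{x}\|$ now \emph{equals} $\rl(V)$ instead of exceeding it, so the auxiliary point that lay in the relative interior of $F_{\vec{y}}$ there is forced to coincide with the vertex $\vec{x}$; consequently $\vec{x}$ is a deep hole of $2L$, equidistant at distance $2\mu(L)$ from $\vec{0}$, from $2\vec{y}$, and from every lattice point of $2L$ whose Dirichlet-Voronoi cell meets $\vec{x}$, that is, from all vertices of the Delone cell at $\vec{x}$. One must therefore place the center $\vec{c}$ of $B$ by displacing $\vec{x}$ a little into $\mathrm{int}\bigl(\DV(2L)\cap G_{\vec{y}}^{+}\bigr)$ along a direction that \emph{simultaneously} pushes the distance to every one of these tight lattice points beyond $2\mu(L)$ — whereas in Proposition~\ref{prop:equalfacets} only $2\vec{y}$ had to be dealt with. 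Finding such a direction is exactly where the facet hypothesis enters: it ensures that each facet reaches the circumsphere, and without it the conclusion can genuinely fail, since then $\DV(2L)$ need not be the only body of volume $2^d\det L$ and diameter $4\mu(L)$ in $\MK_L$.
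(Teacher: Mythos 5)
Your treatment of the range $V<2^d\det L$ is correct and coincides with the paper's: there $\rl(V)<2\mu(L)$, so the hypothesis supplies each facet of $\DV(2L)$ with a vertex outside $B_d\bigl(\rl(V)\bigr)$ and Corollary~\ref{small-radii-theorem} applies directly. The reduction of the endpoint case $V=2^d\det L$ to showing $G_{\vec{y}}=H_{\vec{y}}$ for every $\vec{y}\in C_L$ is also sound. The gap is in your final step, and it is not a technical difficulty that a cleverer choice of displacement direction would overcome: the ball $B$ you propose to construct cannot exist. You yourself established that in the endpoint case $\MCR=\emptyset$ and hence $\E^d=2L+K$. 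But properties ii) and iii) of the ball from Proposition~\ref{prop:equalfacets} force $B$ to be disjoint from $K$ and from $2L\setminus\{\vec{0}\}+B_d\bigl(\rl(V)\bigr)\supseteq 2L\setminus\{\vec{0}\}+K$, hence disjoint from $2L+K=\E^d$, so $B=\emptyset$. The entire ``insert a ball to gain volume'' mechanism is unavailable once $\vol K=2^d\det L$: the translates of $K$ already tile space and leave no room anywhere. (The local picture confirms this: a vertex $\vec{x}$ with $\|\vec{x}\|=2\mu(L)$ is a deep hole of $2L$, and the balls of radius $2\mu(L)$ about the lattice points of its Delone cell cover a full neighbourhood of $\vec{x}$ with no slack, so no admissible center exists near $\vec{x}$ in $G_{\vec{y}}^{+}\cap\DV(2L)$.)

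The paper closes the endpoint case with a different device, which you should adopt. Since $\E^d=2L+K$, the truncation $K_\varepsilon=K\cap B_d\bigl(2\mu(L)-\varepsilon\bigr)$ still satisfies $\E^d=(2L+K_\varepsilon)\cup\MCR'$, where $\MCR'$ is the set \eqref{eqn:MCR} formed with the reduced radius $2\mu(L)-\varepsilon$; consequently $K_\varepsilon$ is extremal for the diameter $4\mu(L)-2\varepsilon$. The already-settled case of radii strictly below $2\mu(L)$ (via Proposition~\ref{prop:equalfacets}, or Corollary~\ref{small-radii-theorem}) then gives $K_\varepsilon=\DV(2L)\cap B_d\bigl(2\mu(L)-\varepsilon\bigr)$ for every small $\varepsilon>0$, and letting $\varepsilon\to0$ yields $K\supseteq\DV(2L)$, whence $K=\DV(2L)$ by equality of volumes. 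This reduction to sub-maximal radii is precisely what sidesteps the obstruction your construction runs into.
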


\begin{proof}
The case $D<4\mu(L)$ is covered by
Corollary~\ref{small-radii-theorem}. So we assume that $D=4\mu(L)$.
If $K$ is an extremal body for the diameter $D=4\mu(L)$ then
$\E^d=2L+K$. Hence for arbitrarily small $\varepsilon>0$ the convex
body $K_{\varepsilon}=K\cap B_d(D-\varepsilon)$ is extremal for
diameter $D-\varepsilon$, since $\E^d=(2L+K_{\varepsilon})\cup\MCR$.
Thus applying Proposition~\ref{prop:equalfacets} to every facet of
$\DV(2L)\cap B_d(D-\varepsilon)$ we get that
$K_{\varepsilon}=\DV(2L)\cap B_d(D-\varepsilon)$. Hence $K=\DV(2L)$.
\end{proof}

Notice that even in dimension $3$ there are lattices to which
Corollary~\ref{coro_small-radii-theorem} cannot be applied. We give
an explicit example using the notation of {\em Selling parameters} from \cite{cs-1992}:
\begin{remark}
Let $L\in\ML^3$ be the lattice defined by the Selling parameters
$p_{01}=2$ and $p_{ij}=1$ for $i,j=0,\dots,3$ with $i\neq j$ and the
pair $(i,j)\neq(0,1)$. A Gram matrix of a basis $A$ of $L$ is for
example
\[
A^{\top}A=\left(\begin{array}{rrr}
4 & -2 & -1 \\
-2 & 4 & -1 \\
-1 & -1 & 3 \\
\end{array}\right).
\]
The Dirichlet-Voronoi cell of $L$ is a truncated octahedron (or
permutohedron, see Figure \ref{f:permutohedron}). All the vertices of its
$4$-gonal facet given by $1023$, $0123$, $0132$, $1032$ have norm
$\sqrt{35/24}$ whereas the vertex denoted by $0213$ has norm
$\sqrt{3/2}=\mu(L)>\sqrt{35/24}$.
\end{remark}

\begin{figure}[h]
\begin{center}
\includegraphics[width=5.3cm]{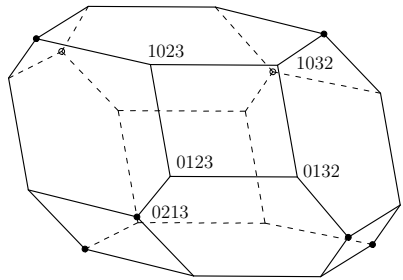}
\caption{The truncated octahedron.}\label{f:permutohedron}
\end{center}
\end{figure}

In order to overcome this problem in $\E^3$ and to solve more equality
cases, we prove the following proposition. It uses the fact that the
Dirichlet-Voronoi cell is a parallelohedron and hence every projection
along a $(d-2)$-face ({\it ridge}) is a centrally symmetric hexagon or, as
a limiting case, a parallelogram. This was independently proved by
McMullen \cite{mcmullen-1980,mcmullen-1981} and Venkov \cite{venkov-1954}.
The facets of the parallelohedron adjacent to the $4$ or $6$ translates of
a ridge are said to form a {\it $4$-belt} or {\it $6$-belt} respectively.

\begin{proposition}\label{prop:ridge}
Let $L\in\ML^d$ be a lattice and let $D$ be at most $4\mu(L)$. Define $V$
by $2\rl(V)=D$. Let $K\in\MK_L$, given as in \eqref{eqn:extremalK}, be
extremal with respect to $L$ and $V$. Let $\DV(2L)$ be given as in
\eqref{eqn:dv2l} and let $\vec{y}\in C_L$ be the center of a facet
belonging to a $6$-belt of $\DV(2L)$. If $G_{\vec{z}}=H_{\vec{z}}$ for all
$\vec{z}\in C_L\setminus\{\vec{y}\}$, then also $G_{\vec{y}}=H_{\vec{y}}$
and hence $K=K_L(V)$.
\end{proposition}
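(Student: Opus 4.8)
The strategy is to argue by contradiction, exactly as in Proposition~\ref{prop:equalfacets}: assuming $G_{\vec{y}}\neq H_{\vec{y}}$, I will construct a ball $B$ with the three properties $B\subset K_L(V)$, $B\cap K=\emptyset$, and $B\cap[2L\setminus\{\vec{0}\}+B_d(\rl(V))]=\emptyset$, which makes $2L$ a packing lattice for $K\cup B$ and contradicts extremality of $K$ via the volume argument from the proof of Theorem~\ref{satz2}. The new ingredient compared with Proposition~\ref{prop:equalfacets} is that we are no longer given a vertex of $F_{\vec{y}}$ lying outside $B_d(\rl(V))$; instead we must exploit the $6$-belt structure together with the hypothesis that all other facet hyperplanes of $\DV(2L)$ already coincide with those of $K$.

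First I would set up the geometry of the $6$-belt through $\vec{y}$. Let $R$ be the ridge ($(d-2)$-face) whose $6$ translates are surrounded by the belt; projecting along $\mathrm{aff}(R)$ collapses $\DV(2L)$ to a centrally symmetric hexagon whose six edges are the projections of the six belt facets, one of them being $F_{\vec{y}}$. The three pairs of opposite edges of the hexagon correspond to three pairs of parallel facet hyperplanes $H_{\vec{y}},H_{\vec{y}_1},H_{\vec{y}_2}$ (up to the sign of the normals), and by the standard parallelohedron relation the outer normals satisfy a linear dependence — concretely the middle hyperplane's normal is a positive combination of the other two, so that $F_{\vec{y}}$ lies "between" the facets centered at $\pm\vec{y}_1,\pm\vec{y}_2$ and their four neighbours in the belt. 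By hypothesis $G_{\vec{y}_i}=H_{\vec{y}_i}$ for $i=1,2$, so $K$ already has the correct facet on all five of the other belt facets. If now $G_{\vec{y}}\neq H_{\vec{y}}$, then $K$ is strictly cut back from $H_{\vec{y}}$, i.e. $G_{\vec{y}}^-\cap\mathrm{aff}(F_{\vec{y}})$ is a proper subset, so there is a point $\vec{z}\in\mathrm{relint}(F_{\vec{y}})$ with $\vec{z}\notin K$; since $\vec{z}$ lies in the relative interior of the facet of $\DV(2L)$ centered at $\vec{y}$, it also satisfies $\|\vec{z}-2\vec{y}\|\le\rl(V)$ but $\|\vec{z}-2\vec{y}'\|>\rl(V)$ for all other $\vec{y}'\in L$. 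If in addition $\|\vec{z}\|<\rl(V)$ we are immediately in the situation of the previous proof and a small ball near $\vec{z}$ works; the point of the $6$-belt hypothesis is to handle the remaining case $\|\vec{z}\|=\rl(V)$, where we must slide $\vec{z}$ slightly into the interior.

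The key step is to produce a direction in which we can push $\vec{z}$ to a point $\vec{c}$ with $\|\vec{c}\|<\rl(V)$, $\vec{c}\in\mathrm{int}\,\DV(2L)$, and $\vec{c}\notin K$ simultaneously. Here the $6$-belt comes in: because the projected hexagon has $\vec{y}$-edge flanked by two non-parallel edges coming from facets $F$ with $G_F=H_F$, the facet $F_{\vec{y}}$ near $\vec{z}$ is "held up" on the $\DV(2L)$-side only by $H_{\vec{y}}$ itself and by facets whose hyperplanes $K$ already realizes; so moving from $\vec{z}$ in the direction $-\vec{y}$ (more precisely into $\DV(2L)\cap G_{\vec{y}}^+$, which is nonempty since $G_{\vec{y}}\subsetneq H_{\vec{y}}^-$-boundary) keeps us in $\mathrm{int}\,\DV(2L)$ for a short time while strictly decreasing the $2L$-constraints' slack only at $\vec{y}$, and does not re-enter $K$ because the only facet of $K$ we could cross is $G_{\vec{y}}$, which we are moving away from. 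Choosing $\vec{c}=\vec{z}-\varepsilon\vec{y}$ for small $\varepsilon>0$ gives $\|\vec{c}\|<\rl(V)$ (the segment from $\vec{z}$ toward $\vec{y}$ points strictly inward since $\|\vec{y}\|<\rl(V)=\|\vec{z}\|$), $\vec{c}\in\mathrm{int}(\DV(2L)\cap G_{\vec{y}}^+)$, and then a ball $B=B_d(\vec{c},\delta)$ with $\delta$ small enough has all three required properties. The main obstacle is precisely verifying that $\vec{c}$ avoids $K$: for this one must know that near $\vec{z}$ the only binding facet inequality of $K$ is the one from $\vec{y}$, which is where one genuinely uses that all the other belt facets of $\DV(2L)$ are already facets of $K$ (so $K$ and $\DV(2L)$ agree in a full neighbourhood of $\vec{z}$ except for the cut by $G_{\vec{y}}$); the hexagon/parallelohedron structure guarantees this local picture. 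Having built $B$, the contradiction with extremality follows verbatim as in Proposition~\ref{prop:equalfacets}, and then $G_{\vec{y}}=H_{\vec{y}}$ forces $K=K_L(V)$ by the representation \eqref{eqn:extremalK} together with \eqref{eqn:dv2l}.
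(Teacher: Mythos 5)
Your plan is to reuse the ball construction of Proposition~\ref{prop:equalfacets}, but that construction cannot be carried out in the only case Proposition~\ref{prop:ridge} actually has to handle, and your case analysis is backwards. In the proof of Proposition~\ref{prop:equalfacets} the centre $\vec{c}=\vec{z}-\varepsilon\vec{y}$ satisfies property~iii) precisely because $\|\vec{z}\|=\|\vec{z}-2\vec{y}\|=\rl(V)$: pushing off the facet in direction $-\vec{y}$ \emph{increases} the distance to $2\vec{y}$ past $\rl(V)$ while keeping the distance to the origin below $\rl(V)$. If instead $\|\vec{z}\|<\rl(V)$ — which is the generic situation when $F_{\vec{y}}$ lies inside $B_d\bigl(\rl(V)\bigr)$, i.e.\ exactly when Proposition~\ref{prop:equalfacets} does not apply — then $\|\vec{z}-2\vec{y}\|<\rl(V)$, and no small perturbation can make this exceed $\rl(V)$; property~iii) is unattainable near $F_{\vec{y}}$. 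So your claim that ``$\|\vec{z}\|<\rl(V)$ is immediately the situation of the previous proof'' is exactly wrong: that is the hard case, and the case $\|\vec{z}\|=\rl(V)$ is the one already settled. The failure is not a technicality one can slide around: by part~iii) of the Lemma in Section~\ref{sec:equality-cases}, an extremal $K$ satisfies $\E^d=(2L+K)\cup\MCR$, and the region $G_{\vec{y}}^+\cap\operatorname{int}K_L(V)$ where you want to place $B$ is disjoint from $\MCR$ and from $K$, hence is covered by $2\vec{y}+K$ (note $2\vec{y}+K\subseteq\overline{G_{\vec{y}}^+}$ because $K=-K$ and $G_{\vec{y}}$ passes through $\vec{y}$). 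Thus no ball with your properties i)--iii) exists for an extremal $K$ there, and the packing argument cannot produce a contradiction.

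The paper's actual proof abandons the ball-packing idea and compares volumes across the facet: it sets $A=G_{\vec{y}}^+\cap K_L(V)$ and $B=G_{\vec{y}}^-\cap\bigl(2\vec{y}+K_L(V)\bigr)$, observes that the point reflection in $\vec{y}$ carries $\overline{A}$ to $\overline{B}$ so $\vol A=\vol B$, and then shows $\vol(B\cap K)<\vol B$; this yields $\vol K\le\vol K_L(V)-\vol A+\vol(B\cap K)<\vol K_L(V)$. The $6$-belt hypothesis and the assumption $G_{\vec{z}}=H_{\vec{z}}$ for $\vec{z}\neq\vec{y}$ enter only at the last step: a suitable relative interior point $\vec{x}$ of a ridge $F=H_{\vec{y}'}\cap F_{\vec{y}}$, pushed to $\vec{x}+\varepsilon\vec{y}$, lands in $B\cap H_{\vec{y}'}^+=B\cap G_{\vec{y}'}^+$ and hence outside $K$. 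Your write-up does correctly sense that the neighbouring belt facets must be used to find a region $K$ fails to occupy, but you try to locate that region on the wrong side of $H_{\vec{y}}$ (inside $\DV(2L)$, where extremality forces it to be covered by a translate of $K$) rather than on the far side inside $2\vec{y}+K_L(V)$, where the deficiency can be exhibited and weighed against the congruent loss $A$. As it stands the proposal has a genuine gap and would need to be restructured along these lines.
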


\begin{proof}
We suppose that $G_{\vec{y}}\neq H_{\vec{y}}$, respectively $K \neq
K_L(V)$. Consider the sets $A=G_{\vec{y}}^+ \cap K_L(V)$ and
$B=G_{\vec{y}}^- \cap \bigl(2\vec{y}+ K_L(V)\bigr)$, see Figure
\ref{f:ridge}. Since $K_L(V)$ is centrally symmetric, the isometry
$\vec{x}\mapsto 2\vec{y}-\vec{x}$ maps the closure $\overline{A}$ to
$\overline{B}$ and hence $\vol A = \vol B$. Clearly, $\vol(A\cap
K)=0$.

\begin{figure}[h]
\begin{center}
\includegraphics[width=4.8cm]{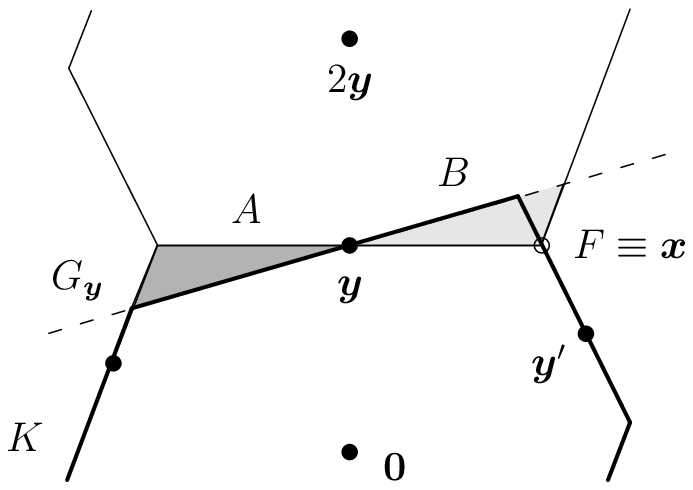}
\caption{}\label{f:ridge}
\end{center}
\end{figure}

By assumption, the facet $F_{\vec{y}}=H_{\vec{y}}\cap \DV(2L)$ of
$\DV(2L)$ with center~$\vec{y}$ contains two ridges $F$ and $2\vec{y}-F$
which are also ridges of other facets in a $6$-belt. Since $F_{\vec{y}}$
is centrally symmetric with respect to $\vec{y}$ we find a relative
interior point $\vec{x}$ of either $F$ or $2\vec{y}-F$ which is contained
in $G_{\vec{y}}^-\setminus G_{\vec{y}}$. Other\-wise $G_{\vec{y}}$ would
be equal to the affine hull of $\{\vec{y}\}\cup F$ and hence equal
to~$H_{\vec{y}}$.

Without loss of generality we assume $\vec{x}\in F$ and
$F=H_{\vec{y}'}\cap F_{\vec{y}}$. We can suppose that the points of
$F$ all lie in $B_d\bigl(r_L(V)\bigr)$ and
$2\vec{y}+B_d\bigl(r_L(V)\bigr)$, because otherwise we could apply
Corollary~\ref{small-radii-theorem} to show
$G_{\vec{y}}=H_{\vec{y}}$. Therefore there exists $\varepsilon > 0$
such that $\vec{x}+\varepsilon \vec{y}$ lies in the interior of
$G_{\vec{y}}^-\cap H_{\vec{y}'}^+$ and $2\vec{y}+K_L(V)$. So there
exists a ball centered in $\vec{x}+\varepsilon \vec{y}$ and
contained in $B\setminus K$ with positive volume. It shows that
$\vol(B\cap K)<\vol B$. Hence $K_L(V)$ has larger volume than $K$
which contradicts the extremality of $K$.
\end{proof}

Corollary \ref{coro_small-radii-theorem} and Proposition
\ref{prop:ridge} can be applied to many lattices so that the
isodiametric problem is solved for them completely, that is, for
every volume respectively every diameter. In the proof of Theorem
\ref{t:sol_lattices}  below we work out this argument for several
prominent lattices, where we did not try to be exhaustive.

\begin{proof}[Proof of Theorem \ref{t:sol_lattices}]
If the automorphism group of a lattice $L$ acts transitively on the
facet centers of $\DV(L)$, then every facet of $\DV(2L)$ contains a
vertex $\vec{x}$ with $\|\vec{x}\|=2\mu(L)$. Lattices with this
transitivity property are for example the integral lattice $\Z^d$
and the root lattices $\mathsf{A}_d$, $\mathsf{D}_d$ and
$\mathsf{E}_d$ (see \cite[Chap.~4, Chap.~22, Cor. to
Th.~5]{cs-1988}).

With a similar argument it can be shown that Corollary
\ref{coro_small-radii-theorem} applies to the Leech lattice
$\Lambda_{24}$ and the lattices $\mathsf{D}_d^*$. The facets of
$\DV(\Lambda_{24})$ are given by the lattice vectors of squared
length $4$ and $6$. The automorphism group of $\Lambda_{24}$ acts
transitively on each of these two sets. A vertex $\vec{x}$ of ``type
$\mathsf{A}_{24}$'' (see \cite[Chap.~23]{cs-1988}) satisfies
$\|\vec{x}\|=\mu(\Lambda_{24})$. This vertex is incident to $275$
facets which correspond to vectors of length $4$ and to $25$ facets
which correspond to vectors of length $6$. Hence in every facet a
vertex of type $\mathsf{A}_{24}$ can be found. The case
$\mathsf{D}_d^*$ is analogous.

Other lattices where Corollary \ref{coro_small-radii-theorem} can be
applied are all the $2$-dimen\-sional lattices and the reciprocals
$\mathsf{A}^*_d$, $\mathsf{E}_6^*$, $\mathsf{E}_7^*$ of the root lattices,
since they are examples of lattices $L$ for which every vertex $\vec{x}$
of $\DV(2L)$ satisfies $\|\vec{x}\|=2\mu(L)$ (see \cite[Chap.~4, Chap.~22,
Th.~7]{cs-1988}).

Finally in order to get the solution for $d=3$, we can use the knowledge
of all combinatorial types of Dirichlet-Voronoi cells in dimension~$3$. It
is well known (Fedorov, \cite{Fed85}) that there are five combinatorial
types of Dirichlet-Voronoi cells: {\em cube}, {\em hexarhombic
dodecahedron}, {\em rhombic dodecahedron}, {\em hexagonal prism} and the
{\em truncated octahedron}. The first four are degenerations of the last
one.

The vertices of $\DV(2L)$ can be partitioned into equivalence classes by
identifying those ones which are lattice translates or point reflections
of each other. Vertices of the same class have all the same distance to
the origin. For a vertex $\vec{x}$ of a facet with center $\vec{y}$, the
opposite vertex $2\vec{y}-\vec{x}$ of that facet belongs to the same
class. Using this symmetry it is easily checked for all but the truncated
octahedron, that every facet of $\DV(2L)$ contains a vertex $\vec{x}$ with
$\|\vec{x}\|=2\mu(L)$. Then Corollary \ref{coro_small-radii-theorem} can
be applied to derive that $K_L(V)$ is the unique extremal set.

In the remaining type, i.e., the one corresponding to the truncated
octahedron (see Figure~\ref{f:permutohedron}), we can apply
Proposition~\ref{prop:ridge}. We find that $\DV(2L)$ has three equivalence
classes of eight vertices each. Each hexagonal facet of the truncated
octahedron contains two opposite vertices of every class and each vertex
class has vertices in four quadrilateral facets (see points on vertices in
Figure \ref{f:permutohedron}). Assuming that only one class attains the
radius $2\mu(L)$, we know by Corollary~\ref{small-radii-theorem} that the
condition of Proposition~\ref{prop:ridge} holds, and therefore we obtain
$K_L(V)$ is the unique extremal set.
\end{proof}

\section{Some Consequences and Remarks}

For a lattice $L\in\ML^d$, the explicit isodiametric inequality for
convex bodies $K\in \MK_L$ can be stated by computing the volume
$V=\vol K_L(V)$ in terms of the diameter $\diam K_L(V)$. Notice that
the function defined as $f(r)=\vol\bigl(B_d(r)\cap \DV(2L)\bigr)$ is
clearly an increasing function of $r$. Then it follows that for any
convex body $K\in\MK_L$
\begin{equation} \label{explicit-inequality}
\vol K\leq f\left(\dfrac{\diam K}{2}\right),
\end{equation}
and equality holds when (and, in many cases, only when) $K=K_L(V)$.

For instance, in the case of the 3-dimensional Euclidean space and the
integral lattice $\mathbb{Z}^3$, the isodiametric inequality is expressed
in the following way (we write $D:=\diam K$ for the sake of brevity):
\[
\def\arraystretch{2}
\begin{array}{ll}
\text{If } 0<D\leq 2, & \vol K\leq \dfrac{\pi}{6}D^3\\
\text{If } 2 \leq D\leq 2\sqrt{2}, & \vol K\leq 2\pi \left( -\dfrac{D^3}{6}+\dfrac{3D^2}{4}-1 \right)\\
\text{If } 2\sqrt{2}\leq D\leq 2\sqrt{3}, & \vol K\leq 4\sqrt{D^2\!-8}+(3D^2-4)\arctan \dfrac{12-D^2}{4\sqrt{D^2-8}}\\
  & \hspace*{1.4cm} -\dfrac{2}{3}D^3\arctan \dfrac{D(12-D^2)}{(D^2+4)\sqrt{D^2-8}}
\end{array}
\]

The extremal sets for these inequalities, i.e., the sets with
maximum volume for different values of the diameter, are shown in
Figure~\ref{extremal_sets}.

\begin{figure}[h]
\begin{center}
\begin{tabular}{cc}
\includegraphics[width=4cm]{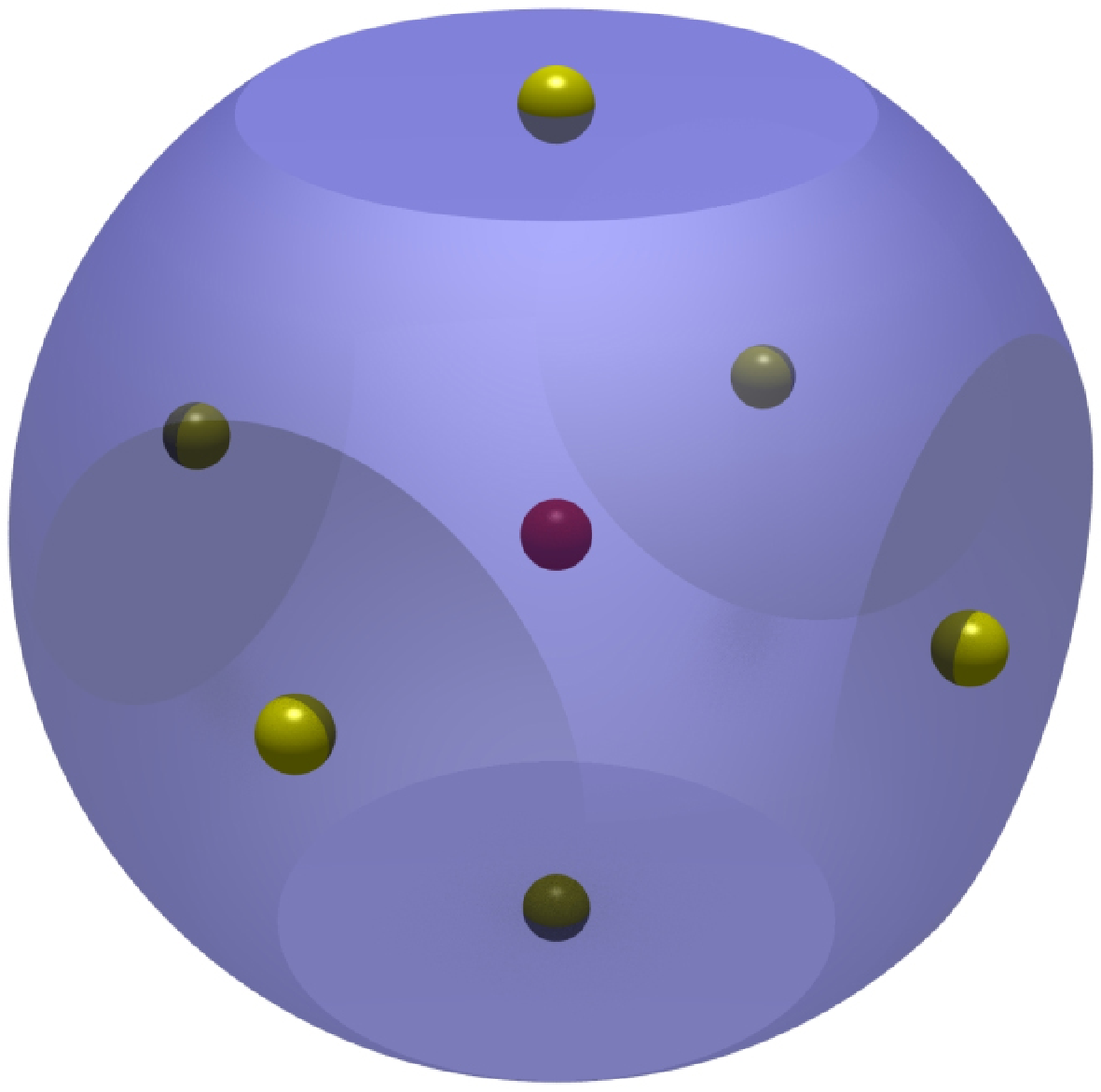}\hspace{0.7cm} &
\hspace*{0.7cm}\includegraphics[width=4cm]{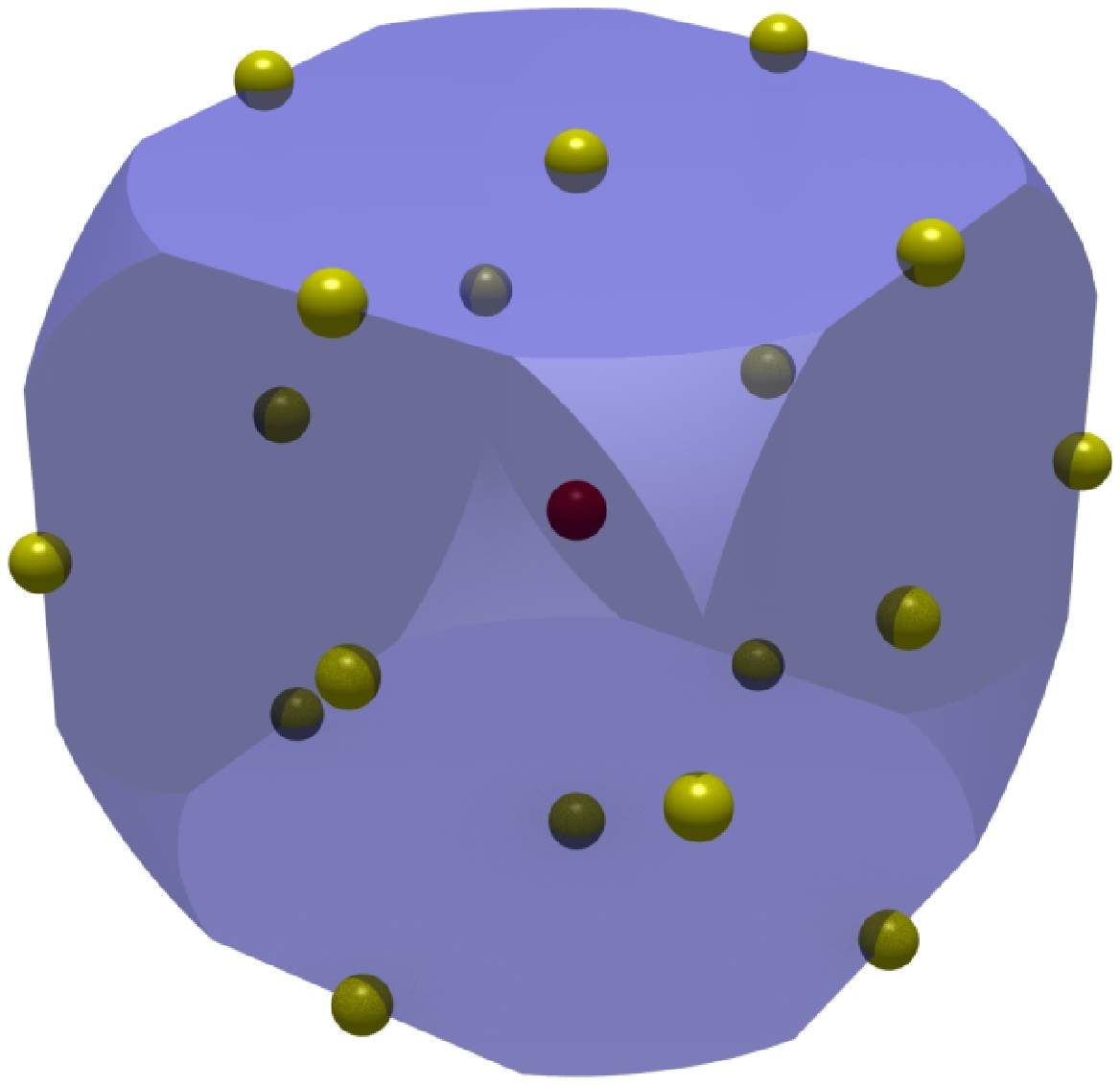}
\end{tabular}
\caption{Extremal sets in $\E^3$ for the integral lattice: when
$\diam K\leq 2\sqrt{2}$ (left) and $\diam K\geq 2\sqrt{2}$
(right).}\label{extremal_sets}
\end{center}
\end{figure}

Notice that, since the diameter is twice the circumradius for centrally
symmetric~$K$, all of the inequalities above also relate volume and
circumradius of~$K$.

\bigskip

{\footnotesize \noindent M. A. Hern\'andez Cifre, Departamento de
Matem\'aticas, Universidad de Murcia, Campus de Espinar\-do,
30100-Murcia, Spain, e-mail: mhcifre@um.es;

\noindent A. Sch\"urmann, Institut f\"ur Algebra und Geometrie,
Otto-von-Guericke Universit\"at Mag\-deburg, D39106 Magdeburg,
Germany, e-mail: achill.schuermann@ovgu.de;

\noindent F. Vallentin, Centrum voor Wiskunde en Informatica (CWI),
Kruislaan 413, 1098 SJ Amsterdam, The Netherlands,
e-mail:f.vallentin@cwi.nl

}

\end{document}